\newcommand{\edit}[1]{{\color{red}{$\clubsuit$#1$\clubsuit$}}}
\newcommand{\sm}{\setminus}
\newcommand{\Frechet}{Fre\`chet}
\newcommand{\Poincare}{Poincar\'e}
\renewcommand{\Re}{{\operatorname{Re}\,}}
\renewcommand{\Im}{{\operatorname{Im}\,}}
\renewcommand{\epsilon}{\varepsilon}
\newcommand{\var}{{\operatorname{Var}}}
\newcommand{\szego}{Szeg\H{o} }
\newcommand{\kahler}{K\"ahler }
\newcommand{\wt}{\widetilde}
\newcommand{\wh}{\widehat}
\newcommand{\PP}{{\mathbb P}}
\newcommand{\R}{{\mathbb R}}
\newcommand{\C}{{\mathbb C}}
\newcommand{\CP}{\C\PP}
\renewcommand{\d}{\partial}
\newcommand{\dbar}{\bar\partial}
\newcommand{\ddbar}{\partial\dbar}
\newcommand{\U}{{\rm U}}
\newcommand{\E}{{\mathbf E}}
\newcommand{\half}{{\textstyle \frac 12}}
\newcommand{\vol}{{\operatorname{Vol}}}
\renewcommand{\phi}{\varphi}
\newcommand{\go}{\mathfrak}
\newcommand{\bcal}{\mathcal{B}}
\newcommand{\ccal}{\mathcal{C}}
\newcommand{\ecal}{\mathcal{E}}
\newcommand{\lcal}{\mathcal{L}}
\newcommand{\ncal}{\mathcal{N}}
\newcommand{\ocal}{\mathcal{O}}
\newcommand{\pcal}{\mathcal{P}}
\newcommand{\scal}{\mathcal{S}}
\newcommand{\al}{\alpha}
\newcommand{\ga}{\gamma}
\newcommand{\ep}{\varepsilon}
\newcommand{\de}{\delta}
\newtheorem{theo}{{\sc Theorem}}[section]
\newtheorem{defin}{{\sc Definition}}
\newtheorem{cor}[theo]{{\sc Corollary}}
\newtheorem{lem}[theo]{{\sc Lemma}}
\newtheorem{prop}[theo]{{\sc Proposition}}
\newenvironment{rem}{\medskip\noindent{\it Remark:\/} }{\medskip}
\title[Quantum ergodic sequences and equilibrium measures
 ] {Quantum ergodic sequences and equilibrium measures}
\author{Steve Zelditch }
\address{Department of Mathematics, Northwestern  University, Evanston, IL 60208, USA}
\email{zelditch@math.northwestern.edu}
\thanks{Research partially supported by NSF grant and DMS-1541126
and by the Stefan Bergman trust  .}
\begin{document}

\begin{abstract} We generalize the  definition of a ``quantum ergodic sequence'' of sections of ample  line bundles $L \to M$ from the case of positively curved
Hermitian metrics $h$ on $L$ to general smooth metrics. A  choice of smooth Hermitian metric $h$ on $L$ and  a Bernstein-Markov measure $\nu$ on $M$ induces an inner product 
on $H^0(M, L^N)$. When $||s_N||_{L^2} =1$, quantum ergodicity is
the condition that  $|s_N(z)|^2 d\nu \to
d\mu_{\phi_{eq}} $ weakly, where $d\mu_{\phi_{eq}} $ is the equilibrium measure associated to $(h, \nu)$. 
The main results are that normalized logarithms $\frac{1}{N} \log |s_N|^2$
of quantum ergodic sections tend to the equilibrium potential, and that
random orthonormal bases of $H^0(M, L^N)$ are quantum ergodic.

\end{abstract}

\maketitle

One of the principal themes of `stochastic \kahler geometry' is the asymptotic
equilibrium distribution of zeros of random holomorphic fields on \kahler manifolds $(M, J, \omega).$ A basic example is when $M = \CP^1 = \C \cup \{\infty\}$,  the
Riemann sphere, and when the random fields are holomorphic polynomials
$p_N$ of degree $N$. The zero set $Z_{p_N} = \{z: p_N(z) = 0\} $ is a random
set of $N $ points $\vec \zeta = \{\zeta_1, \dots, \zeta_N\}$ on $\CP^1$ and is encoded by the  empirical measure
\begin{equation} \label{mu} \mu_{\vec \zeta}: = \frac{1}{N} [Z_{p_N}] : =  \frac{1}{N} \sum_{j=1}^{N} \delta_{\zeta_j}. \end{equation}
Here, $[Z_{p_N}]$ is the geometer's notation for the normalized current of integration 
over $Z_{p_N}$. 
Given a weight $e^{- \phi}$ and a suitable measure $d\nu$ on $\CP^1$,
one defines the inner product
${\rm Hilb}_N(\phi, \nu)$ on the space $\pcal_N$ of polynomials of degree $N$ by
$$||p_N||^2_{{\rm Hilb}_N(\phi, \nu)} = \int_{\C} |p_N(z)|^2 e^{-N \phi} d\nu(z), $$
and this inner product induces a Gaussian measure on $\pcal_N$. The asymptotic equilibrium distribution of zeros is the statement that
for a random sequence $\{p_N\}$ of polynomials of increasing degree,
the empirical measures $[Z_N]$ almost surely tend to the weighted equilibrium measure $d \mu_{eq}$ corresponding to $(\phi, \nu)$. 

The asymptotic  equilibrium distribution of zeros is by now a very general
result that holds for Gaussian random holomorphic sections of line bundles
over \kahler manifolds with respect to weights and measures satisfying some quite weak conditions. Polynomials of degree $N$ on  $\C$ generalize
to the space $H^0(M, L^N)$ of holomorphic sections of the $N$th power of 
an ample line bundle $L \to M$ over any Riemann surface, or over any \kahler manifold of any (complex) dimension $m$. The weight $e^{-\phi}$ is regarded as a Hermitian metric $h$ on $L$. The geometric language is useful  not only for putting the polynomial problem in a general context but also for indicating the proper assumptions on the weights and measures, as well as the definition of the associated equilibrium measure. The almost sure equilibrium distribution of zeros of random sequences $\{s_N\}$ of holomorphic sections of degree $N$ follows from the fact that
the associated potentials $\frac{1}{N} \log |s_N(z)|_{h^N}^2$ tend almost surely to the equilibrium potential $\phi_{eq}$.

The first  purpose of this survey is to review the results on asymptotic
equilibrium distribution of zeros. In a sense, it is a universal result that has been developed alongside  generalizations of the notion of equilibrium potential and measure. It is closely related to asymptotics of Bergman kernels relative to quite general weights and measures. A second purpose of this survey is to explain the notion of a
`quantum ergodic sequence' of sections and moreover to generalize that notion  to 
the same weights and measures for which one has  \begin{equation} \label{uN} u_N:= \frac{1}{N} \log |s_N(z)|_{h^N}^2
\to \phi_{eq}\;\; \rm{in}\; L^1(M, dV)\end{equation} for any
volume measure $dV$.  The main new result of this survey is a proof that random 
sequences of sections are quantum ergodic and that normalized logarithms of quantum ergodic sequences tend to $\phi_{eq}$. The proofs mainly consist in combining the proof in \cite{ShZ99} of this result in the setting of positive Hermitian line bundles together with the results of Berman, Boucksom,  and  Witt-Nystrom \cite{BB10, BBWN11} on asymptotics of Bergman kernels with respect to rather general weights and measures. A quite general proof of  \eqref{uN} for normalized logarithms  of random sequences of polynomials of increasing degree
 has recently been proved by Bloom-Levenberg \cite[Theorems 4.1-4.2]{BL15}
in the `local setting' of polynomials on $\C^{m}$. The  `ergodic' proof we give is rather different and the generalized notion of quantum ergodic seems to us of independent interest.

\subsection{Historical background}

Before stating definitions and results, let us  try to put the problems into context. The asymptotic equilibrium distribution of zeros  was first proved
in the special case  of a positive Hermitian  line bundle $(L, h)$ over a \kahler manifold $(M, \omega)$, where 
 the curvature form of $h$ is the \kahler form $\omega$ \cite{ShZ99} (see also \cite{NV98} for genus one surfaces in
dimension one).  In this case, 
the equilibrium measure is simply the volume form $dV_{\omega} = \frac{\omega^m}{m!}$ of the metric. The next result, at least as known to the author, occurred 
in the almost opposite case of Kac-type polynomial ensembles,  where $h \equiv 1, $  $M = \C$ and $\nu$ is an analytic measure 
on an analytic plane domain or its boundary \cite{ShZ03}. It was in this setting that contact was made with the classical notion of equilibrium measure, and the pair of results  
suggested to the authors that equilibrium distribution of zeros should be a universal kind of result. T.  Bloom 
\cite{Bl05} shed new light on the results by pointing  out the role of the complex Green's function and extremal subharmonic functions in the equlibrium result. His article also introduced Bernstein-Markov measures $d\nu$ as the most general framework for defining the Gaussian random sections.   In \cite{Ber09}, R. Berman defined equilibrium potentials and measures for general metrics on an ample line bundle  over a \kahler manifold of any dimension,
and proved that \eqref{mu} tends to the equilibrium measure in this generality.  In \cite{ZeiZ10}, the equilibrium distribution of zeros for random polynomials with respect to general smooth weights and Bernstein-Markov measures  was derived from a large deviations
principle for the empirical measures \eqref{mu} in complex dimension one; the author later extended the result to any Riemann surface. 
There are many other  articles proving results like \eqref{uN} in different settings, including
\cite{DS06, BlS07,CM15,DMM16,Bay16}. The equilibrium distribution of zeros 
 in dimension one is also reminscent of equilibrium distribution of eigenvalues or for the points of a  Coulomb gas or other determinantal  point processes (see e.g. \cite{AHM11}), but zero point processes in complex dimension one  are more complicated than Coulomb gases and are almost never determinantal. The proofs that zeros are equidistributed by the equilibirum measure are  quite different than the proofs for Coulomb gases. Moreover, the higher dimensional generalizations required advances in the theory of Bergman kernels and equilibrium measures \cite{GZ05,Ber09,Bl09}.

\subsection{Quantum ergodic sequences}

We now explain what is meant by a  {\it quantum ergodic}  sequence $\{s_N\}$ of holomorphic sections of powers $L^N$ of an  ample line bundle $L \to M$  over a
\kahler manifold $(M, \omega)$ of dimension $m$. It is a deterministic notion.

  In the case $M = \CP^m$, $\{s_N\}$ is a sequence of homogeneous holomorphic polynomials on $\C^{m+1}$ of increasing  degrees $N$.
The definition of ``quantum ergodic'' depends on the choice of a Hermitian metric $h$ on $L$,
and a probability measure $\nu$ on $M$. 
 In the positive Hermitian  line bundle 
case,  where the curvature  form $\omega_h =  i \ddbar \log h$ \footnote{Also written $\omega_{\phi} $ with $\phi = - \log h$.} of $h$ is the \kahler metric $\omega$, and where $d\nu $ is the volume form $dV_{\omega}= \frac{\omega^m}{m!} $ of $\omega$, a
 sequence $\{s_N\}$ with $s_N \in H^0(M,
L^N)$ of (not necessarily normalized) sections is {\it \kahler
quantum ergodic} if \begin{equation} \label{QE} \frac{|s_N(z)|^2_{h^N}}{||s_N||^2_{L^2}} dV_{\omega}\to
d V_{\omega}\; \rm{(weak*),\; i.e.}  \int_M f(z)\; \frac{|s_N(z)|^2_{h^N}}{||s_N||^2_{L^2}} 
dV_{\omega}  \to \int_M f\; dV_{\omega} \; (\forall f \in C(M)). \end{equation}
Properties of such sections were studied in the \kahler context in \cite{Z97}.
In \cite{NV98, ShZ99} it was shown that quantum ergodicity of a sequence implies that the normalized logarithms 
\begin{equation} \label{uNdef} u_N: = \frac{1}{N} \log \frac{|s_N(z)|^2_{h^N}}{||s_N||^2_{L^2}} \end{equation}
are  asymptotically
extremal (quasi-) plurisubharmonic functions, in the sense that 
$\limsup_N u_N \leq 0$ and $u_N \to 0$   in $L^1(M)$. Equivalently, if we express  $s_N = f_N e^N$ as a local holomorphic function $f_N$ relative to a
local holomorphic frame $e_L$ of $L$, and write \begin{equation} \label{eL} |e_L(z)|_h = e^{- \phi(z)} \end{equation} then 
\begin{equation} \label{fN} \frac{1}{N} \log \frac{|f_N(z)|^2}{||s_N||^2_{L^2}} \to \phi. \end{equation}
The potential $\phi =  -\log h$ of $\omega_h$ is the `equilibrium potential'  of $\omega_{\phi}$ in this positive line bundle  setting. Moreover, it was
shown  in \cite{ShZ99} that sequences of  random orthonormal bases of $H^0(M, L^N)$ 
are almost surely quantum ergodic. The proofs are based in part on the 
asymptotics of Bergman kernels of positive Hermitian line bundles. 

Over the last fifteen years, there has been a steady progression of generalizations of Bergman 
kernel asymptotics and asymptotics of random zero sets from the positive Hermitian line bundle case to general smooth metrics on ample (or just big) line bundles and Bernstein-Markov measures.  In particular, R. Berman initiated a new line of research with his articles \cite{Ber09, Ber09a} on Bergman kernels for pairs $(h, d\mu)$ where $d\mu$ is a volume form  and $h$ is a $C^2$ Hermitian metric on an ample line bundle. Later, in \cite{BBWN11}, the measure was allowed to be  any Bernstein-Markov measure $\nu$. 
We now generalize the definition and properties
of quantum ergodic sections to such $(h, \nu)$.

The definition involves the inner products ${\rm Hilb}_N(h, \nu)$
induced by the data $(h, \nu)$    on the spaces  $H^0(M,
L^N)$ of holomorphic sections of powers $L^N \to M$  by
\begin{equation} \label{HILB} ||s||^2_{{\rm Hilb}_N(h, \nu)} : = \int_M |s(z)|^2_{h^N} d\nu(z). \end{equation}
We let  $h $ \eqref{eL} be a general $C^{2}$ Hermitian metric on $L$, and
  denote its positivity set by  \begin{equation} \label{M0} M(0) = \{x \in M:  \omega_{\phi}  |_{T_x M}\; \rm{has \; only\; positive\; eigenvalues} \}, \end{equation}
i.e. the set where $\omega_{\phi}$  is a positive $(1,1)$ form. 
For a compact set $K \subset M$, also define  the  {\it equilibrium
potential }  $\phi_{eq} = V^*_{h, K}$ \footnote{Both notations $\phi_{eq} $ and $ V^*_{h, K}$, and also $P_K(\phi)$, are standard and we use them interchangeably. $V^*_{h,K}$ is called the pluri-complex Green's function in \cite{Bl05} and elsewhere.}
\begin{equation}\label{EQPOT} V_{h,
K}^* (z)= \phi_{eq}(z):= \sup \{u(z): u \in PSH(M, \omega_0), 
u \leq  \phi  \; \mbox{on}\;
 K\},
\end{equation}
where $\omega_0$ is a reference \kahler metric on $M$ and $ PSH(M, \omega_0)$ are the psh functions $u$ relative to $\omega_0$, i.e.(see \cite[Definition 2.1]{GZ05})
\begin{equation} \label{PSHM} PSH(M, \omega_0) = \{u \in L^1(M, \R \cup \infty): dd^c u + \omega_0  \geq 0,\;\; {\rm and}\; u \; {\rm is} \; \omega_0-u.s.c.\}. \end{equation} Further define the coincidence set,
\begin{equation} \label{D} D : = \{z \in M: \phi(z) = \phi_e(z)\}. \end{equation} Following Berman, we define the {\it equilibrium measure} associated to \eqref{eL} by 
\begin{equation}\label{EQMDEF} d\mu_{\phi} = (dd^c \phi_{eq})^m/m! =  {\bf 1}_{D \cap M(0)}   (dd^c \phi)^m/m!. \end{equation}
Here, $d^c = \frac{1}{i} (\partial - \dbar)$.
Finally, we fix a probability measure $\nu$  satisfying the Bernstein-Markov
property,
\begin{equation} \label{BM}  \sup_{z \in K} ||s(z)||_{h^N} \leq
C_{\epsilon} e^{\epsilon N} ||s||_{{\rm Hilb}(h^N, \nu)}, \;\; \forall \epsilon > 0, \;
\end{equation}
where as above $K =
\mbox{supp}\; \nu.$

The generalization of  \eqref{QE} is given in the following

\begin{defin}  \label{QEDEF}  Given $(h, \nu)$ as above, we say that $\{s_N\} $ with $s_N \in H^0(M, L^N)$ is
a quantum ergodic sequence with respect to $(h,\nu)$  if
$$\frac{|s_N(z)|^2_{h^N}}{||s_N||^2_{{\rm Hilb}_N(h, \nu)}} d\nu \to
d\mu_{\phi} $$ in the weak* sense
of measures.

\end{defin}

As explained in Section \ref{RANQE},  the definition of
quantum ergodic sequence originated in the study of eigenfunctions of
quantum maps in the setting of positive line bundles over \kahler manifolds. In generalizing the definition to the  $(h, \nu)$ setting of Definition \ref{QEDEF}, we cannot follow this approach since we do not currently have a definition of quantum map in the
general setting. It was later realized that quantum ergodic sequences
behave  like random ones in terms of their first two moments, so one may reverse the sequence of events and define quantum ergodic sequences as ones which have the same asymptotics of the first two as  random ones (see Section \ref{RANSEQINTRO}  for definitions
and Lemma \ref{EMASS} for the rigorous statement). By the easy calculation of \eqref{EXNcAs}, this boils down to 
$$\frac{|s_N(z)|^2_{h^N}}{||s_N||^2_{{\rm Hilb}_N(h, \nu)}} d\nu \simeq
N^{-m} \Pi_{h^N, \nu}(z) d\nu, $$
and by the Bergman kernel asymptotics of Berman, Witt-Nystrom and
others (see Theorem \ref{Ber1}), the right side tends to $d \mu_{\phi_{eq}}$.
Hence the Definition above is consistent with the comparison to random
sequences.

The first result is that normalized logarithms of ergodic sequences tend
to the equilibrium potential.

\begin{theo} \label{QETH}  Let $L \to M$ be an ample line bundle over
a projective \kahler manifold $M$. Let $h$ be a $C^2$ metric on $L$, and let $\nu$ be  a Bernstein-Markov measure.
If $\{s_N\}$ is a quantum ergodic sequence with respect to $(h, \nu)$,
then $u_N \to \phi_{eq}$ in $L^1(M, d V)$ (with respect to 
a  volume form $dV$  on $M$. \end{theo}
Here, $u_N$ is defined by \eqref{uNdef}.

\subsection{Zeros of a quantum ergodic sequence}

In the case of positive Hermitian line bundles, it was shown in \cite{ShZ99} that the normalized currents of integration over the zero sets $Z_{s_N}$
of a quantum ergodic sequence of sections tends to the \kahler form
$\omega$. This is simply a corollary of the fact that $u_N \to 0$ in $L^1(M)$. 
Indeed, the Poincar\'e-Lelong formula gives
\begin{equation} \label{PL} Z_s =\frac{\sqrt{-1}}{\pi} \ddbar \log|f| =
\frac{\sqrt{-1}}{\pi} \ddbar \log \|s\|_{h_n} + N\omega \;,\end{equation}
where locally $s= f e_L^N$ in a frame. Let $\wt Z_{s_N} = \frac{1}{N} Z_{s_N}.$  Then   for any smooth test form $\psi\in{\mathcal
D}^{m-1,m-1}(M)$, we have
$$\left(\wt Z_N-\omega,\psi\right)=\left(u_N,
\frac{\sqrt{-1}}{\pi}\ddbar \psi\right)\to 0\;,$$
and from  $$\left(\wt Z_N,\psi\right)\leq
\frac{c_1(L)^m}{m!}\sup|\psi|\;,$$ the conclusion of the lemma holds for
all $C^0$ test forms $\psi$.

  Theorem \ref{QETH} allows for a generalization of this result to the
  setting of this note and shows   that  the normalized zero currents of random sequences tend to
the {\it equilibrium metric} $dd^c \phi_{eq}$. Thus,

\begin{cor} \label{zerotheorem}   Let $L \to M$ be an ample line bundle over
a projective \kahler manifold $M$. Let $(h, \nu)$ be any pair as above.
If $\{s_N\}$ is a quantum ergodic sequence with respect to $(h, \nu)$, $\frac{1}{N}Z_{s_N}\to dd^c \phi_{eq}$ weakly in the sense
of measures. \end{cor}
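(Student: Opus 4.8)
The plan is to deduce Corollary \ref{zerotheorem} from Theorem \ref{QETH} by the same Poincar\'e-Lelong argument sketched in the preceding paragraph, but now carried out relative to the reference form $\omega_0$ and the general equilibrium potential $\phi_{eq}$ rather than the \kahler form $\omega$ and the potential $\phi \equiv 0$. The starting point is the Poincar\'e-Lelong formula: writing $s_N = f_N e_L^N$ in a local holomorphic frame with $|e_L|_h = e^{-\phi}$, one has locally
\begin{equation} \label{PLgen} Z_{s_N} = \frac{\sqrt{-1}}{\pi} \ddbar \log |f_N| = \frac{\sqrt{-1}}{\pi}\ddbar \log \|s_N\|_{h^N} + N\, dd^c \phi, \end{equation}
since $dd^c \phi$ is (up to the $\sqrt{-1}/\pi$ normalization) the curvature of $h$. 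Dividing by $N$ and recalling the definition \eqref{uNdef} of $u_N$ together with $\frac{1}{N}\log\|s_N\|_{h^N}^2 = u_N - \frac{1}{N}\log\|s_N\|^2_{{\rm Hilb}_N}$, we see that the normalized current $\frac{1}{N} Z_{s_N}$ differs from $\frac{\sqrt{-1}}{2\pi}\ddbar\, u_N + dd^c\phi$ only by an exact current whose potential is the spatially constant $\frac{1}{N}\log\|s_N\|^2_{{\rm Hilb}_N}$, which therefore contributes nothing under $\ddbar$.

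The key analytic input is Theorem \ref{QETH}, which gives $u_N \to \phi_{eq}$ in $L^1(M, dV)$. Since $\ddbar$ is a continuous operator on distributions (equivalently, currents), $L^1$-convergence of the potentials forces weak convergence of their $\ddbar$-derivatives: for any smooth test form $\psi \in \dcal^{m-1,m-1}(M)$,
\begin{equation} \label{testpair} \left( \frac{1}{N} Z_{s_N} - dd^c\phi, \psi \right) = \left( \frac{\sqrt{-1}}{2\pi}\, u_N,\, \ddbar \psi \right) \to \left( \frac{\sqrt{-1}}{2\pi}\,\phi_{eq},\, \ddbar\psi\right) = \left( dd^c\phi_{eq} - dd^c\phi,\, \psi\right), \end{equation}
where the last equality integrates $\ddbar$ back by parts onto $\phi_{eq}$. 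Cancelling the $dd^c\phi$ terms yields $\frac{1}{N} Z_{s_N} \to dd^c\phi_{eq}$ against all smooth $\psi$. To upgrade from smooth to continuous test forms, I would invoke the uniform mass bound $\left(\frac{1}{N} Z_{s_N}, \psi\right) \leq \frac{c_1(L)^m}{m!}\sup|\psi|$, exactly as in the displayed argument above: the currents $\frac{1}{N}Z_{s_N}$ have uniformly bounded mass, so weak convergence on the dense (in $C^0$) subspace of smooth forms propagates to all $C^0$ forms by a standard density-plus-equicontinuity argument.

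The main obstacle, and the place where care is genuinely needed, is the passage from the local identity \eqref{PLgen} to a global statement, since $\phi$ is only a local potential (the Hermitian metric $h$ is given by local weights $e^{-\phi}$ that differ by moduli of transition functions across frames) and is in general only $C^2$, not smooth. The clean way around this is to note that the globally defined object is the curvature current $dd^c\phi$, i.e. the normalized curvature of $(L,h)$, whose cohomology class is $c_1(L)$; the Poincar\'e-Lelong formula is then a global identity of currents $Z_{s_N} = \frac{\sqrt{-1}}{\pi}\ddbar \log\|s_N\|_{h^N} + N\, dd^c\phi$ in which both sides are frame-independent, so no gluing issue arises once one works with $\|s_N\|_{h^N}$ rather than $|f_N|$. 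I would also remark that $\phi_{eq} = V^*_{h,K}$ is an $\omega_0$-psh function, so $dd^c\phi_{eq}$ is a well-defined positive $(1,1)$-current and the limit on the right of \eqref{testpair} is legitimate; its top power appearing in \eqref{EQMDEF} is the equilibrium measure, which connects this zero-current statement to the mass-normalization already used in Definition \ref{QEDEF}.
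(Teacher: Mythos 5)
Your overall strategy --- Poincar\'e--Lelong plus the $L^1$ convergence of Theorem \ref{QETH}, then the uniform mass bound $\left(\frac{1}{N}Z_{s_N},\psi\right)\leq \frac{c_1(L)^m}{m!}\sup|\psi|$ to pass from smooth to $C^0$ test forms --- is exactly the argument the paper intends; the paper only displays it in the positive case (around \eqref{PL}) and asserts the generalization. Your handling of the $C^0$-extension and of the constant $\frac{1}{N}\log\|s_N\|^2_{{\rm Hilb}_N}$ is fine.

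There is, however, one step that is false as written, and it sits precisely at the point you yourself flag as delicate but do not resolve. In your displayed chain, the pairing $\left(\frac{\sqrt{-1}}{2\pi}\,\phi_{eq},\,\ddbar\psi\right)$ integrates by parts to $\left(\frac{\sqrt{-1}}{2\pi}\,\ddbar\phi_{eq},\,\psi\right)$, \emph{not} to $\left(dd^c\phi_{eq}-dd^c\phi,\,\psi\right)$; the claimed ``last equality'' silently subtracts $dd^c\phi$, i.e.\ assumes the curvature of $h$ vanishes. Taken literally, your computation gives $\frac{1}{N}Z_{s_N}\to dd^c\phi_{eq}+dd^c\phi$, which is off by the curvature term and is already cohomologically impossible ($\frac{1}{N}Z_{s_N}$ represents $c_1(L)$, while $dd^c$ of a globally defined $L^1$ function is exact). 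The source of the discrepancy is the local-weight versus global-function ambiguity: the object whose $dd^c$ appears in the Corollary is the equilibrium \emph{weight}, a local potential for a positive current in $c_1(L)$, whereas the $L^1$ limit of $u_N=\frac{1}{N}\log\bigl(|s_N|^2_{h^N}/\|s_N\|^2\bigr)$ is the \emph{relative} potential $\phi_{eq}-\phi$. Indeed Theorem \ref{LOGBERG} controls $\frac{1}{N}\log B_{h^N,\nu}\to\phi_{eq}$ for the \emph{unweighted} Bergman function $B$, and $\frac{1}{N}\log\Pi_{h^N,\nu}=\frac{1}{N}\log B_{h^N,\nu}-\phi$. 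The clean repair avoids the add-and-subtract entirely: work with the local psh functions $v_N=\frac{1}{N}\log|f_N|^2=u_N+\phi$, for which $v_N\to\phi_{eq}$ in $L^1_{loc}$ in every frame, and conclude $\frac{1}{N}Z_{s_N}=\frac{\sqrt{-1}}{2\pi}\ddbar v_N\to dd^c\phi_{eq}$ directly; your mass-bound and density argument then finishes the proof verbatim.
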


\subsection{\label{RANSEQINTRO} Existence of quantum ergodic sequences}

Results on quantum ergodic sequences are  only useful if we can produce examples of quantum ergodic
sequences. In \cite{ShZ99}, B. Shiffman and the author proved that when
$\omega_h$ is a \kahler metric and $\nu = dV_{\omega}$, then  a
{\it random sequence}, and moreover a 
{\it random orthonormal basis }, of sections is quantum ergodic. The main result
of this note generalizes this statement to general smooth Hermitian
metrics $h$ and smooth volume forms $d\nu$. 

First, we define random sequence and random orthonormal basis. 
Each  inner product $\rm{Hilb}_N(h, \nu)$  induces a Gaussian measure
$\gamma_{h^N, \nu}$ on $H^0(M, L^N)$ and an associated spherical
measure $\mu_{h^N, \nu}$ on the unit spheres $SH^0(M, L^{N})$ in
$H^0(M, L^{N})$ with respect to ${\rm Hilb}_N(h, \nu)$ (see Section \ref{BACKGROUND}). We then have the
notion of a `random' sequence of ${\mathcal L}^2$-normalized
sections of $H^0(M, L^{N})$. Namely, we consider the probability
space $({\mathcal S}, d\mu)$, where \begin{equation} \label{SPHERICAL} {\mathcal S} =
\prod_{N=1}^{\infty} SH^0(M, L^{N}),\;\;\; \mu = \prod_{N =
1}^{\infty} \mu_{h^N, \nu}. \end{equation}  We refer to the elements of $\scal$
as random $L^2$-normalized sequences; see  \S \ref{BACKGROUND} for background.
Using results on the off-diagional asymptotics of the Bergman kernel 
for certain pairs $(h, \nu)$ of  R. Berman \cite{Ber09,Ber06}, we prove
\begin{theo} \label{RANSEQ} For any $C^2$ metric $h $ on $L$ and for any  smooth volume form $d\nu$,
almost every sequence $\{s_N\} \in \scal$   is quantum ergodic in the sense of Definition
\ref{QEDEF}.





 \end{theo}

In 
\cite{ShZ99}, the stronger result is proved   that random
orthonormal bases of sections, not just individual sections of
each degree, are quantum ergodic. 
As in \cite{ShZ99}, we let \begin{equation} \label{ONB} {\mathcal ONB} = \prod_{N = 1}^{\infty}
{\mathcal ONB}_N \end{equation} denote the product space of orthonormal bases of
$H^0(M, L^N)$ with respect to a given Hermitian inner product
${\rm Hilb}_N(h, \nu)$. Each ${\mathcal ONB}_N$ may be identified with
$U(d_N)$ where  $d_N + 1 = \dim H^0(M, L^N)$. We endow ${\mathcal ONB} $ with  the product of unit mass Haar measures.
An orthonormal basis will be denoted $\scal_N = \{S^N_0, \dots,
S^N_{d_N}\}$.

Using the same results on the off-diagional asymptotics of the Bergman kernel 
for certain pairs $(h, \nu)$, we prove

\begin{theo} \label{RANONB} Let  $h = e^{- \phi}$ be a smooth Hermitian metric on $L$ and
and let $d\nu$ be a smooth probability measure (normalized volume form). Then,  almost
every sequence $\{\scal_N\} $  of orthonormal bases of  $H^0(M,
L^N)$ is quantum ergodic in the sense of Definition \ref{QEDEF}.

 \end{theo}
 
 Both results would extend from smooth volume forms to general Bernstein-Markov measures if the off-diagonal Bergman kernel asymptotics of Theorem \ref{BERMANOFFD} below could be extended to that setting.

\subsection{\label{RANQE} Random sequences versus quantum ergodic sequences}

It follows from the results of this article that random sequences
are quantum ergodic and vice-versa that quantum ergodic sequences
behave in some ways like  
random sequences. However, 
there are more mechanisms to produce  quantum ergodic sequences than
just by taking random sequences.  For instance, quantum ergodic sequences
arise as eigensections of unitary  quantum ergodic maps in the positive line bundle case \cite{Z97}. Quantum maps are quantizations of symplectic maps of $(M, \omega)$. In the positive line bundle case, 
a (quantizable) symplectic map $\chi$ on $(M, \omega)$  is quantized as a unitary Toeplitz Fourier
integral operator of the form $U_{N, \chi}: = \Pi_N \sigma_N T_{\chi} \Pi_N$ where
$\Pi_N = \Pi_{h^N, dV_{\omega}}$ where $\omega_h = \omega$, where
$\sigma_N$ is a certain semi-classical symbol and where $T_{\chi} s(z)
= s(\chi(z))$ is the translation (or Koopman) operator corresponding to
$\chi$. In fact, to define $T_{\chi}$ on sections of $L^N$ it is necessary to lift the sections and $\chi$ to the associated principal $S^1$ bundle $X_h \to M$ . Alternatively one could parallel translate sections along paths from $z$ to $\chi(z)$ (see \cite{Z97,FT15}).

  It would be interesting to generalize this definition to the setting of
  this article, where $h$ is any smooth Hermitian metric and $d\nu$ is any
  Bernstein-Markov measure.  However, when  $dd^c \log h$ is not a symplectic form, it is not clear what is the appropriate generalization of  `symplectic map'  $\chi$ or of its quantization. In the true symplectic setting, $\chi^* \omega = \omega,$ (so that
$\chi^* dV_{\omega} = dV_{\omega}$). For general $(h, \nu)$, the natural
generalization would seem to be that $\chi^* dd^c \phi_{eq} = dd^c  \phi_{eq}$ (so that
$\chi^* \mu_{eq} = \mu_{eq}$). One may try to quantize $\chi$ by a formula
similar to the above and see if $U_{N, \chi}$   is an asymptotically  unitary operator modulo lower order terms on $H^0(M, L^N)$  with respect to ${\rm Hilb}_N(h, \nu)$ as $N \to \infty$.\footnote{In other words, is $U_N^* U_N = \Pi_N + o(1)$ where $o(1)$ is measured in the operator norm.} Other foundational questions are whether $U_{N, \chi}$ satisfies at least a weak
form of the Egorov theorem (see \cite{Ze98,FT15}), or whether the associated \szego kernel possesses scaling asymtpotics on the coincidence set \eqref{D}.
  One might hope to prove that its eigensections should be quantum ergodic if $\chi$ is ergodic 
with respect to $d\mu_{eq}$. At this time of writing, it is not obvious that
   the proposed quantization of such a map   produces  an asymptotically unitary operator,
 since only a shadow of  the  usual `symbol calculus' of Toeplitz Fourier integral 
operators \cite{BoGu81}  can  be  expected to generalize to this setting.   
      
\subsection{Acknowledgements} Thanks to R. Berman for comments
on an earlier version, in particular for emphasizing that Definition \ref{QEDEF} should be consistent
with the expected mass formula for random sequences.  Thanks also to T. Bayraktar for remarks and references on  Theorem \ref{BERMANOFFD},  and to the referees for many comments that helped improve the exposition.

\section{\label{BACKGROUND} Background}

We work throughout in the setting of \cite{ShZ99} and the more general one of \cite{BBWN11}. We let $M$ be a compact projective complex manifold of (complex) dimension $m$, and
let $L \to M$ be an ample holomorphic line bundle. The space of holomorphic
sections of the $N$th power of $L$ is denoted $H^0(M, L^N)$. In \cite{Ber09,BBWN11}, the line bundle is only assumed to be big, and that
is enough for most of the results, relying as they do on the results
of \cite{Ber06, Ber09,BBWN11}.  But in this survey we assume that $L$ is ample.

Let $h$ be a smooth (at least $C^2$) 
Hermitian metric on $L$ and denote its 
curvature form by
$$\label{curvature}\Theta_h=-\ddbar
\log\|e_L\|_h^2\;.$$  Here, $e_L$ is a local non-vanishing
holomorphic section of $L$ over an open set $U\subset M$, and
$\|e_L\|_h=h(e_L,e_L)^{1/2}$ is the $h$-norm of $e_L$ \cite{GH}. 
In the positive line bundle case it is assumed
that $$\omega_{\phi}=\frac{\sqrt{-1}}{\pi }\Theta_h$$ is a \kahler form. This is the assumption in \cite{ShZ99} but we do not  make that assumption in this note.
As in \cite{BerWN, BBWN11},  we consider the more general situation of
a holomorphic line bundle $L \to M$ together with 

\begin{itemize}

\item A $C^2$ Hermitian metric  $h = e^{- \phi}$ \eqref{eL};

\item A compact non-pluripolar set $K$ \footnote{A pluripolar set is a subset of the $-\infty$ set of a plurisubharmonic function.} and a  stably Bernstein-Markov measure $\nu$  with respect to $(K,
\phi)$ \eqref{BM}.

\end{itemize}
As above,
 we denote the set where the form
is \kahler by $M(0)$ \eqref{M0}.

Given a compact non-pluripolar set $K \subset M$, the equilibrium potential $\phi_{eq} $ is defined  as the upper semi-continuous
regularization of the  upper envelope   \eqref{EQPOT}.  

The associated equilibrium measure is the Monge-Ampere measure
${\rm MA}(\phi_{eq})$
of $\phi_{eq}$, defined by \begin{equation} \label{MADEF} {\rm MA}(\phi):=  (dd^c \phi_{eq})^m/m!. \end{equation}

\subsection{\szego kernel}

 The data $(h, \nu)$ induces the inner product \eqref{HILB} on $H^0(M, L^N)$.  The corresponding orthogonal
projection is then denoted by $$\Pi_{h^N, \nu}: L^2(M, L^N) \to H^0(M, L^N), $$
where the inner product is given by  ${\rm Hilb}_N(\phi, \nu)$.
If  $\{S_j^N\}$ of $H^0(M, L^N)$ is an orthonormal basis  with
respect to the inner product ${\rm Hilb}_N(\phi, \nu)$, then  the Schwartz kernel of $\Pi_{h^N, \nu}$ with respect to $d\nu(z)$ is given by,
$$\Pi_{h^N, \nu}(z,w) = \sum_{j = 1}^{d_N} S^N_j(z) \otimes \overline{S^N_j(w) }$$
in the sense that 
$$\Pi_{h^N, \nu} s (z)  = \int_M \langle \Pi_{h^N, \nu}(z,w),
s(w)\rangle_{h^N} d \nu(z). $$ 

We denote
the {\it density of states}  by \begin{equation}
\label{DOS} \Pi_{h^N, \nu}(z):   ;=  \sum_{j = 1}^{d_N} |S^N_j(z)|^2_{h^N}. \end{equation}
If we write $S_j^N = f_j^N e_L^N$ in a local frame, then we also define
the Bergman kernel by
\begin{equation} \label{BKdef}  B_{h^N, \nu} (z,w) =  \sum_{j = 1}^{d_N} f_j^Nj(z) \overline{f_j^N(w)}, \end{equation}
so that $ \Pi_{h^N, \nu}(z,w) = B_{h^N, \nu}(z,w) e_L^N(z) \overline{e_L^N(w)}. $ We also define the 
 Bergman measure by\footnote{The notation $B_{h^N, \nu}(z,z)$ is used in articles of Berman; $\Pi_{h^N, \nu}(z)$ is the contraction of the diagonal $\Pi_{h^N, \nu}(z,z)$.}
$$\Pi_{h^N, \nu}(z) d\nu = B_{h^N, \nu} (z,z) e^{- N \phi} d\nu. $$

For $N$ sufficiently large, $B_{h^N, \nu} (z,z)$ is everywhere positive
in the case of an ample line bundle, since there is no point $z$
where all sections vanish (the base locus). But  the Bernstein-Markov measure, hence the Bergman measure,  is supported 
on $K := {\rm supp}\; \nu$.

\subsection{\label{GFS}  Spherical and Gaussian measuress on $H^0(C, L^N)$ induced by Hermitian inner products}

The data  $(h, \nu)$ induces the inner product \eqref{HILB}. 
Let $d_N = \dim H^0(M, L^N)$.
The inner product induces  a Gaussian
measure $\gamma_N = \gamma_N(h, \nu)$  on this complex vector space by the formula,
\begin{equation}\label{gaussian1}d\gamma_N(s_N):=\frac{1}{\pi^m}e^
{-|c|^2}dc\,,\quad s_N=\sum_{j=1}^{d_N}c_jS^N_j\,,\quad
c=(c_1,\dots,c_{d_N})\in\C^{d_N}\,,\end{equation} where
$\{S_1^N,\dots,S_{d_N}^N\}$ is an orthonormal basis for
$H^0(M, L^N)$, and $dc$ denotes $2d_N$-dimensional
Lebesgue measure.   The measure $\ga_N$ is characterized by the
property that the $2d_N$ real variables $\Re c_j, \Im c_j$
($j=1,\dots,d_N$) are independent Gaussian random variables with
mean 0 and variance $1/2$; equivalently,
$$\E_N c_j = 0,\quad \E_N c_j c_k = 0,\quad  \E_N c_j \bar c_k =
\de_{jk}\,,$$ where $\E_N$ denotes the expectation with respect to
the measure $ \ga_N$.

  We also define the spherical measure  $d\mu_{h^N, \nu}$ to  be the unit
mass Haar measure on $SH^0(M, L^N)$, the sections of $L^2$ norm 1. 
 The spherical measure is  equivalent to the
 {\it
normalized  Gaussian measure}
\begin{equation}\label{gaussian}\mu_N:=\tilde\ga_{2d_N}=
(\frac{d_N}{\pi})^{d_N} \;e^ {-d_N|c|^2}d\lcal(c)\,,\quad\quad
s=\sum_{j=1}^{d_N}c_jS_j^N = \langle \vec c, \vec S \rangle\,,\end{equation} where $\{S_j^N\}$ is
an orthonormal basis for $H^0(M,L^N)$ and $d\lcal(c)$ is Lebesgue measure on the $\R^{2m} \simeq \C^m$ .  Recall that this
Gaussian is characterized by the property that the $2d_N$ real
variables $\Re c_j, \Im c_j$ ($j=1,\dots,d_N$) are independent
random variables with mean 0 and variance $1/2d_N$; i.e.,
\begin{equation}\label{normalized}\langle c_j \rangle_{\mu_N}= 0,\quad
\langle c_j c_k\rangle_{\mu_N} = 0,\quad \langle c_j \bar c_k
\rangle_{\mu_N}= \frac{1}{d_N}\de_{jk}\,.\end{equation} 

\subsection{Expected mass}
\begin{lem} \label{EMASS} For either the spherical ensemble or the normalized Gaussian
ensemble,
$$\E ||s_N(z)||^2 = \frac{1}{d_N} \Pi_{h^N, \nu}(z). $$
\end{lem}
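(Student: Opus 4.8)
The plan is to expand a random section in the given orthonormal basis, to compute the pointwise squared norm as a Hermitian quadratic form in the coefficients, and then to take expectations term by term, invoking only the second-moment formulas already recorded in \eqref{normalized}. Since both ensembles differ only in the distribution of the coefficient vector $\vec c = (c_1,\dots,c_{d_N})$, the two cases will be handled by one and the same computation once the relevant covariance is identified.

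First I would write $s_N = \sum_{j=1}^{d_N} c_j S_j^N$ and expand the pointwise norm using the Hermitian inner product $\langle\,\cdot\,,\cdot\,\rangle_{h^N}$ on the fiber $L^N_z$:
$$\|s_N(z)\|^2_{h^N} = \Big\langle \sum_j c_j S_j^N(z),\ \sum_k c_k S_k^N(z)\Big\rangle_{h^N} = \sum_{j,k=1}^{d_N} c_j \bar c_k\, \langle S_j^N(z), S_k^N(z)\rangle_{h^N}.$$
Because the basis sections $S_j^N(z)$ are deterministic, all the randomness sitting in the coefficients $c_j$, linearity of $\E$ gives
$$\E\,\|s_N(z)\|^2_{h^N} = \sum_{j,k=1}^{d_N} \E[c_j \bar c_k]\, \langle S_j^N(z), S_k^N(z)\rangle_{h^N}.$$

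Second, I would insert the second-moment identity. For the normalized Gaussian ensemble this is precisely \eqref{normalized}, namely $\langle c_j \bar c_k\rangle_{\mu_N} = \frac{1}{d_N}\delta_{jk}$. For the spherical ensemble the same identity holds: the unit-mass Haar measure on $SH^0(M,L^N)$ is invariant under the action of $U(d_N)$ on the coefficient vector, so its covariance $\E[c_j\bar c_k]$ is a $U(d_N)$-invariant Hermitian form on $\C^{d_N}$, hence a scalar multiple of $\delta_{jk}$, and the constraint $\sum_j |c_j|^2 \equiv 1$ fixes the scalar to be $1/d_N$. Substituting collapses the double sum onto the diagonal:
$$\E\,\|s_N(z)\|^2_{h^N} = \frac{1}{d_N}\sum_{j=1}^{d_N} |S_j^N(z)|^2_{h^N}.$$
Recognizing the right-hand sum as the density of states \eqref{DOS}, $\Pi_{h^N,\nu}(z) = \sum_{j} |S_j^N(z)|^2_{h^N}$, yields $\E\,\|s_N(z)\|^2 = \frac{1}{d_N}\Pi_{h^N,\nu}(z)$, as claimed.

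There is no genuine obstacle here: the statement is a direct second-moment calculation, and the off-diagonal terms drop out entirely because the coefficients are uncorrelated. The only point deserving a word of care is the spherical case, where instead of computing moments on the sphere by hand one uses unitary invariance (equivalently, the asymptotic equivalence of $\mu_{h^N,\nu}$ with the normalized Gaussian $\mu_N$ recorded in \S\ref{GFS}) to obtain $\E[c_j\bar c_k] = \frac{1}{d_N}\delta_{jk}$. I would also remark that the final expression is manifestly independent of the choice of orthonormal basis, as it must be, since $\Pi_{h^N,\nu}(z)$ is intrinsically defined by $(h,\nu)$.
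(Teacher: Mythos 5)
Your proof is correct and is exactly the computation the paper dismisses as ``obvious'': expand in the orthonormal basis, use $\E[c_j\bar c_k]=\tfrac{1}{d_N}\delta_{jk}$ (from \eqref{normalized}, or by unitary invariance in the spherical case) to kill the off-diagonal terms, and identify the remaining diagonal sum with the density of states \eqref{DOS}. Your extra care in justifying the covariance for the spherical ensemble is a welcome elaboration but not a different route.
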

\begin{proof} 
It is obvious that
$$\E \left| \sum_{j=1}^{d_N}c_jS^N_j \right|_{h^N}^2
=\frac{1}{d_N} \sum_j |S^N_j(z)|_{h^N}^2 = \frac{1}{d_N} \Pi_{h^N, \nu}(z). $$
Note that $d_N = N^m (1 + O(\frac{1}{N})$.
\end{proof}

\subsection{Expected distribution of zeros of  random polynomials}

Let $M  = \CP^1$, let $L = \ocal(1)$ (the dual of the hyperplane line bundle \cite{GH}) and let $\ocal(N) = L^N$.  Then $H^0(\CP^1, \ocal(N))$ may be identified with the space $\pcal_N$ of polynomials $p_N$ on $\C$ of degree $N$. That is, in a frame $e_L$ over the affine chart $\C $, $s_N = p_N e^L$ (see \cite{GH} or \cite{ShZ99} for this standard fact). 

The empirical measure  of zeros $\{\zeta_1, \dots, \zeta_N\}$ of $p_N \in \pcal_N$ is the
probability measure on $\C$ defined by
\begin{equation} \label{EMP}\frac{1}{N} [Z_{p_N}] =  \mu_{\zeta} =  \frac{1}{N} \sum_{\zeta_j : p_N(\zeta_j) = 0} \delta_{\zeta_j}. \end{equation}

\begin{defin} For any probability measure $P$ on $\pcal_N$, the expected distribution of zeros   of $p_N \in \pcal_N$ is the probability measure $\E_N
\frac{1}{N} Z_{p_N}$ on $\C$ defined on a test function $\phi \in C_c(\C)$ by
$$\langle  \E_N \;\mu_{\vec \zeta}, \phi \rangle = \int_{\pcal_N} \{\frac{1}{N}  \sum_{\zeta_j : p_N(\zeta_j) = 0} 
\phi(\zeta_j) \} dP(p_N), $$  $$
 =  \frac{1}{N}  \frac{i}{2 \pi}  \int_{\pcal_N}\left(\int_{\C} \phi \ddbar \log |p_N| \right) dP(p_N), $$ 
\end{defin}
Recall that in complex dimension one, if $f(z)$ is a complex analytic
function, then by \eqref{PL},
\begin{equation}
\label{PL2}[ Z_f]  = \sum_j \delta_{\zeta_j} =  \frac{i}{2 \pi} \ddbar \log |f|^2  = \frac{i}{2\pi}
\frac{\partial^2 \log |f|^2}{\partial z \partial \bar{z}} dz \wedge d \bar{z}. \end{equation}

The definition extends with no essential change to ample line bundles $L \to M$ over a Riemann surface $C$, except that the number of zeros is the degree  $N c_1(L)$ of $L^N$. We assume for simplicity that $c_1(L)  = 1$.
  For $s\in H^0(C, L^N)$, we let $Z_s$
  denote empirical measure of zeros,
$$\frac{1}{N} ([Z_s],\psi)= \frac{1}{N}  \sum_{z: s(z) = 0} \psi(z)$$  When
$s= f e_L^{\otimes N}$, we have by the \Poincare-Lelong formula \eqref{PL},
\begin{equation}
\label{PL3} \frac{1}{N} [Z_s]  = \frac{i}{N \pi} \ddbar \log |f| = \frac{i}{N\pi}
\ddbar \log \|s\|_{h^N} + \omega_{\phi}\;.\end{equation}

In  higher dimensions,  the zero set $Z_{s_N}$
is a complex hypersurface rather than a discrete set of points. 
For a general ample line bundle over a \kahler manifold, we also have:

\begin{lem} Let $\{s_j^N\}$ be an orthonormal basis of $H^0(M, L^N)$. 
Let $s_j^N =f_j^N e^N$.  Then,
$$\E_N (\frac{1}{N} [Z_s^N]) = \frac{\sqrt{-1}}{2 \pi N} \ddbar \log \sum_{j=1}^{d_N}
|f^N_j|^2. $$ Moreover,  $$ \E_N (\frac{1}{N} [Z_s^N]) = \frac{\sqrt{-1}}{2 \pi N} \ddbar \log
B_{h^N, \nu}(z,z) =  \frac{\sqrt{-1}}{2 \pi N} \ddbar \log
\Pi_{h^N, \nu}(z,z) + \omega_{\phi}.
$$

\end{lem}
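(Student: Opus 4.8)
The plan is to begin from the \Poincare-Lelong formula \eqref{PL}, push the expectation $\E_N$ through the $\ddbar$ operator, and then evaluate the resulting expectation of a logarithm pointwise, using the fact that the value of a random section at a fixed point is a complex Gaussian. Throughout I write a random section as $s=\sum_{j=1}^{d_N} c_j s_j^N = f\,e^N$ with $f=\sum_j c_j f_j^N$, so that by \eqref{PL} the normalized current of zeros is $\frac{1}{N}[Z_s^N]=\frac{\sqrt{-1}}{\pi N}\ddbar\log|f|$.

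First I would pair $\frac{1}{N}[Z_s^N]$ with a smooth test form $\psi\in\mathcal{D}^{m-1,m-1}(M)$, obtaining $\frac{\sqrt{-1}}{\pi N}\int_M \log|f(z)|\,\ddbar\psi$, and then take $\E_N$ of both sides. The analytic crux is to move $\E_N$ inside the integral against $\ddbar\psi$: this is a Fubini interchange, legitimate once one checks that $z\mapsto \E_N|\log|f(z)||$ is integrable on $M$. That integrability holds because for each fixed $z$ the only singularity of $\log|f|$ is logarithmic while the Gaussian density is bounded, so the expectation is finite and locally bounded in $z$. Granting the interchange, $\E_N(\frac{1}{N}[Z_s^N])=\frac{\sqrt{-1}}{\pi N}\ddbar\,\E_N\log|f|$ in the sense of currents.

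Next I would evaluate $\E_N\log|f(z)|$ for fixed $z$. Since $f(z)=\sum_j c_j f_j^N(z)$ is a linear combination of the jointly Gaussian coefficients, it is itself a complex Gaussian, and by \eqref{normalized} (equivalently Lemma \ref{EMASS} read in the frame) its variance is $\E_N|f(z)|^2=\frac{1}{d_N}\sum_j|f_j^N(z)|^2$. Writing $f(z)=\sigma(z)\zeta$ with $\zeta$ a standard complex Gaussian and $\sigma(z)^2$ this variance, one has $\E_N\log|f(z)|=\frac12\log\sigma(z)^2+\E\log|\zeta|$, where $\E\log|\zeta|$ is a universal constant. Hence $\E_N\log|f(z)|=\frac12\log\sum_j|f_j^N(z)|^2$ plus a $z$-independent constant (absorbing both $\E\log|\zeta|$ and $-\frac12\log d_N$). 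Because $\ddbar$ annihilates constants, applying it yields $\E_N(\frac{1}{N}[Z_s^N])=\frac{\sqrt{-1}}{2\pi N}\ddbar\log\sum_j|f_j^N|^2$, which is the first asserted identity; and by the definition \eqref{BKdef}, $\sum_j|f_j^N(z)|^2=B_{h^N,\nu}(z,z)$, giving the middle expression. I note that the whole computation is insensitive to the choice of ensemble (spherical or Gaussian, normalized or not), since every ensemble-dependent factor in the variance becomes an additive constant after $\log$ and is killed by $\ddbar$.

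Finally, for the form involving $\Pi_{h^N,\nu}$ and $\omega_\phi$, rather than bookkeeping the weight relation $\Pi_{h^N,\nu}(z)=B_{h^N,\nu}(z,z)e^{-N\phi}$ directly, I would simply rerun the argument on the other half of \eqref{PL}, namely $\frac{1}{N}[Z_s^N]=\frac{\sqrt{-1}}{\pi N}\ddbar\log\|s\|_{h^N}+\omega_\phi$, which already carries the curvature term. The same Gaussian-logarithm identity applied to $\|s(z)\|_{h^N}$, whose squared expectation is $\frac{1}{d_N}\Pi_{h^N,\nu}(z)$ by Lemma \ref{EMASS}, gives $\E_N\log\|s(z)\|_{h^N}=\frac12\log\Pi_{h^N,\nu}(z)$ up to a constant, whence $\E_N(\frac{1}{N}[Z_s^N])=\frac{\sqrt{-1}}{2\pi N}\ddbar\log\Pi_{h^N,\nu}(z)+\omega_\phi$; consistency of the two expressions is exactly the relation $\Pi_{h^N,\nu}=B_{h^N,\nu}e^{-N\phi}$. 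The main obstacle is genuinely the first step: justifying the Fubini interchange of $\E_N$ with the distributional $\ddbar$, which requires uniform-in-$z$ control of $\E_N|\log|f(z)||$ near the singular locus, together with the positivity $\sum_j|f_j^N(z)|^2>0$ (no base locus, valid since $L$ is ample and $N$ large) that makes $\log\sigma(z)^2$ finite everywhere. The pointwise Gaussian computation and the identification of $B_{h^N,\nu}$, $\Pi_{h^N,\nu}$, and $\omega_\phi$ are then routine.
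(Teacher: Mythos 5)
Your proposal is correct and follows essentially the same route as the paper: both hinge on splitting $\log|\langle \vec c,\vec f\rangle|$ into the deterministic radial part $\log|\vec f|=\frac12\log\sum_j|f_j^N|^2$ plus an angular part whose expectation is a $z$-independent constant annihilated by $\ddbar$ (your $\E\log|\zeta|$ is exactly the paper's observation that $\int\log|\langle\vec a,\vec u\rangle|\,d\gamma_N(a)$ is independent of the unit vector $\vec u$ by unitary invariance). The only differences are cosmetic: you are more explicit about the Fubini interchange, which the paper takes for granted.
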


The proof of the Lemma is simple. 
Let $s = \sum_j a_j s_j^N$ and write it as $\langle \vec a, \vec s^N \rangle
= \langle \vec a, \vec f^N \rangle e^N$. 
 Let $\psi \in C^2(M)$. Then
 $$ \E_N \langle \frac{1}{N} [Z_s^N]), \psi \rangle = \frac{\sqrt{-1}}{\pi N} \int_{\C^{d_N}} \int_M \ddbar \log
|\langle \vec a, \vec f\rangle|  \psi d\gamma_N(a) \;.$$ To compute the integral, we write $\vec f = |\vec f|
\vec u$ where $|\vec u| \equiv 1.$ Evidently, $\log |\langle \vec a, \vec f\rangle| =
\log |\vec f| + \log |\langle \vec a, \vec u \rangle|$. The first term gives
\begin{equation} \frac{\sqrt{-1}}{\pi N} \int_M \ddbar \log
|\vec f|  \psi = \frac{\sqrt{-1}}{\pi N} \int_M \ddbar \log \Pi_{h^N}(z,z)
  \psi  +  \int_M \omega_{\phi} \psi. \end{equation}

 We
now look at the second term.  We have $$\label{term=0}
\frac{\sqrt{-1}}{\pi} \int_{H^0(C, L^N)} \int_M \ddbar \log
|\langle \vec a, \vec u\rangle|  \psi d\gamma_N(a)$$ $$=
\frac{\sqrt{-1}}{ \pi}\int_M  \ddbar \left[ \int_{H^0(M, L^N)}
\log |\langle \vec a, \vec u \rangle|  d\gamma_N(a)\right] \psi =0,$$
since the average $\int \log |\langle \vec a, \vec u \rangle|
d\gamma_N(a)$ is a constant independent of $\vec u$ for $|\vec u|=1$, and thus
the operator $ \ddbar$ kills it.

When $\omega_{\phi}$ is a \kahler form and $\nu = \frac{\omega_{\phi}^m}{m!}$, there exists a complete asymptotic expansion, 
$$\Pi_{h^N, \nu}(z,z) =  a_0 N + a_1(z) + a_2(z) N^{-1} + \cdots$$
and it follows that
$$\E_N (\frac{1}{N} [Z_s^N])  \to \omega_{\phi}. $$
We refer to \cite{Ze98,ShZ99} for background.

We now turn to the case where the curvature of $h$ is not necessarily postive.

\section{\label{NONSTANDARD} Non-standard Bergman kernel asymptotics}

The proof of Theorem \ref{QETH}  relies on several results of R. Berman \cite{Ber06, Ber09,Ber09a},
of Berman-Witt-Nystrom \cite{BerWN} and Berman-Boucksom-Witt-Nystrom \cite{BBWN11}.   Let $h$ be a $C^2$
Hermitian metric, 
locally defined in a holomorphic frame \eqref{eL}.

The results below do not use parametrix constructions for the Bergman kernel (see \cite{BoSj,BBSj,Ze98} for background on parametrices). The starting point is the extremal property
\begin{equation} \label{EXTREMAL} 
\Pi_{h^N,  \nu}(z) = \sup \{|s_N(z)|^2_{h^N} \; s_N \in H^0(M, L^N), \;\;
||s_N||_{{\rm Hilb}_N(h, \nu)} = 1. \}. \end{equation}
This is an immediate consequence of the fact that  $\Pi_{h^N,  \nu}$ is the orthogonal projection with respect to ${\rm Hilb}_N(h, \nu)$, i.e. that
$s_N = \int_M (\Pi_{h^N,  \nu}, s_N)_{h^N} d \nu$. The Cauchy-Schwartz inequaltiy gives the upper bound. The lower bound follows by using 
$\frac{\Pi_{h^N,  \nu}(\cdot, z)}{||\Pi_{h^N,  \nu}(\cdot, z)||_{{\rm Hilb}_N(h, \nu)}} $ for $s_N$.

 Theorem 1.3 of \cite{Ber09}  states the following asymptotics of the density
 of states \eqref{DOS}:
\begin{theo} \label{Ber1} Let $L \to M$ be an ample line bundle over a \kahler manifold and let $h$ be a $C^2$ Hermitian metric on $L$. If $\nu $ is a smooth volume form, then in the weak* sense of measures,
\begin{equation}  N^{-m} \Pi_{h^N, \nu} (z) \nu \to d\mu_{\phi} = {\bf 1}_{D \cap M(0)}  \; (dd^c \phi)^m(z)/m!= {\rm MA} (\phi_{eq}), \end{equation}
the equilibrium measure \eqref{EQMDEF}. Moreover, $$N^{-m} \Pi_{h^N, \nu}(z) \to {\bf 1}_{M(0) \cap D} \; \det (dd^c \phi_{eq}(z))$$
almost everywhere, where $\det (dd^c \phi_{eq}(z)) =  \frac{(dd^c \phi_{eq}(z))^m}{m!\; d\nu(z)}. $
\end{theo}

Theorem B of \cite{BBWN11} gives a more general result.   

\begin{theo} \label{BBWNTH} Let $(X, L)$ be a compact complex manifold equipped with an ample line bundle $L \to X$. Let $K$ be a non-pluripolar compact subset of $X$ and $\phi$ a continuous weight on $L$. Let $\mu$ be a Bernstein-Markov measure for $(K, \phi)$. Then,
\begin{equation} \label{MALIM2} N^{-m} \Pi_{N \phi}(z) d\nu \to {\rm MA}(\phi_{eq}) \; \rm{in\; the\; weak*\; sense}.
\end{equation}
\end{theo}
It may be useful to state the result in the notation of \cite{BBWN11}:  They denote the  normalized density of states \eqref{DOS} by
$$\beta(\nu, \phi) = \frac{1}{N} \rho(\nu, \phi) \nu, \;\rm{where}\;\rho(\nu, \phi) = \sum |s_i|^2_{h^N}. $$  Theorem B of \cite{BBWN11} then states,
$$\lim_{N \to \infty} \beta(\nu, N \phi) = \mu_{\phi}. $$

The non-standard Bergman asymptotics are illustrated in \cite{ShZ03} in
the case where $h = 1$ and with simply connected  analytic plane domains in terms of exterior Riemann mapping functions.

Let us briefly indicate some key ideas in the proof of Theorem \ref{BBWNTH}. In the setting of general weights and measures, it is non-standard to construct parametrices for the Bergman kernel as used in
\cite{BoSj,BBSj,Ze98}. Instead, in  \cite{BerWN, BBWN11}, the
 proofs depend on the fact that the Bergman measure (density of states) is the differential of a certain functional $F_N$ on the affine
space of all continuous weights (for a fixed compact set $K$). The $F_N$
converge to a concave functional $F$ with continuous \Frechet \;differential,
and the differential of $F$ is represented by the equilibrium measure
$\rm{MA}(\phi_{eq})$. Moreover,  $F_N$ is concave for any $N$. It follows
that the derivatives also converge. 
The functionals are defined by \footnote{$F_N$ is denoted $\lcal_N$ in \cite{BBWN11}.}
$$F_N(h, \nu) = \frac{(m+1)!}{2 N^{m+1}} \log \rm{Vol} \;\bcal^2(h, \nu). $$
where $\bcal^2(h, \nu)$ is the unit ball in $H^0(M, L^N)$ with respect to
${\rm Hilb}(h, \nu)$,
respectively,
$$F(h, \nu) = \ecal_0(\phi_{eq}), $$
where $\ecal_0 $ is the Monge-Ampere energy  functional (see \cite[Section2]{BBWN11}. Namely, $\ecal_0$ is the functional whose variational derivative
at the potential $\phi$ is represented by the Monge-Ampere measure ${\rm MA}(\phi)$).
Under a certain Bernstein-Markov condition, it is proved in \cite{BB10,BBWN11} that $$F(h, \nu) = \lim_{N \to \infty} F_N(h, \nu). $$
See \cite[Therem A]{BB10} and  \cite[(0.9)]{BBWN11} together with the Bernstein-Markov condition in \cite[p. 8]{BBWN11}.

The next series of results pertain to  normalized logarithms of Bergman kernels and their convergence to the equilibrium potential. 
The following asymptotics of the `Bergman metrics' is a combination of results found in  \cite[Theorem 3.7]{Ber09a},    \cite[Lemma 3.4]{BlS07}
and in  \cite[Lemma 2.3]{Bl07} and \cite[Proposition 3.1]{BL15}  for polynomials on $\C^m$ and Bernstein-Markov measures. See also    (1.9)  of \cite[Theorem 1.5]{Ber09} when
$(h, \nu)$ consists of a  smooth Hermitian metric on an ample line bundle,
and a smooth volume form and \cite[Proposition 2.9]{Bay16} for the statement in  the case of 
smooth Hermitian metrics on line bundles  and Bernstein-Markov measures,
but the proof is cited from \cite{Ber09, Ber09a}. 

 As above, write 
 $\Pi_{h^N, \nu} (z,w) = B_{h^N, \nu}(z,w) e_L^N(z) \overline{e_L^N(w)}$ in a local frame \eqref{BKdef}, and let $  B_{h^N, \nu} (z) =  \sum_{j = 1}^{d_N} |f_j^Nj(z) |^2. $ 

\begin{theo} \label{LOGBERG} For smooth Hermitian metrics $h$ on an ample line bundle $L \to M$ as above,  and
Bernstein-Markov measures $\nu$, 
\begin{equation} \label{EPLIM} \frac{1}{N} \log B_{h^N, \nu}(z) \to   \phi_{eq}(z) \end{equation} 
uniformly. 
\end{theo}

\begin{proof} (Sketch of proof following \cite{BlS07}) In the notation of \cite{BlS07,Bl07} let $K  = \rm{supp}\; \nu$ and let
$$\Phi_N^K(z) = \sup \{|s_N(z)|^2_{h^N}, \;\; s \in H^0(M, L^N),\; \sup_K
|s_N(z)|^2_{h^N} \leq 1\}. $$ 
This extremal function is almost the same as the density of states 
\eqref{EXTREMAL} except that the normalizing condition uses the 
sup norm on $K$  rather than the $L^2$ norm with respect to
${\rm Hilb}_N(h, \nu)$. By the Bernstein-Markov property of $\nu$, these 
two normalizations are asymptotically equivalent if one takes logarithms. 
Indeed, for any $\epsilon > 0$ there exists $C_{\epsilon} >0$ so that
\begin{equation} \label{BLOOM} \frac{1}{d_N} \leq \frac{\Pi_{h^N,\nu}(z)}{\Phi_N^K(z)} \leq C_{\epsilon}
e^{\epsilon N} d_N. \end{equation} Indeed, if $\sup_K |s_N(z)|^2_{h^N} \leq 1$, then
$$\begin{array}{lll} |s_N(z)|_{h^N} & = &\left| \int_K \Pi_{h^N, \nu}(z,w) \cdot s_N(w) d \nu(w) \right| \\ &&\\
& \leq & \int_K |\Pi_{h^N,\nu}(z,w)|) d\nu(w) \\ &&\\
&\leq & \int_K \Pi_{h^N,\nu}(z,z)^{\half}  \Pi_{h^N,\nu}(w,w)^{\half}d\nu(w)
\\&& \\
&= &  \Pi_{h^N,\nu}(z,z)^{\half} \nu(K)^{\half} [\int_K  \Pi_{h^N,\nu}(w) d\nu(w) ]^{\half} =  \Pi_{h^N,\nu}(z,z)^{\half}  d_N^{\half}. \end{array}$$
This inequality implies the left inequality of \eqref{BLOOM}. 

For the right inequality, one uses the Bernstein-Markov inequality
$\sup_K |S^N_j(z)|_{h^N} \leq C e^{\epsilon N} $ on an orthonormal 
basis $\{S^N_j\}$. By the definition of $\Phi_N^K$ one has
$ |S^N_j(z)|^2_{h^N} \leq C e^{\epsilon N} \Phi_N^K(z), $ so that
$$\Pi_{h^N, \nu}(z)  \leq d_N C e^{ \epsilon N} \Phi_N^K(z). $$
This completes the proof of \eqref{BLOOM}. It is clear that this estimate
is universal, i.e. does not use any special properties of $(h, \nu)$.

From \eqref{BLOOM} it follows that
$$\frac{1}{N} \log \frac{\Pi_{h^N, \nu}(z)}{\Phi_N^K(z)} \to 0. $$
This reduces the problem to finding the limit of  $\frac{1}{N} \log \Phi_N^K(z). $ It is immediate from the upper envelope definition \eqref{EQPOT}   of the equilibrium potential that $\frac{1}{N} \log \Phi_N^K \leq \phi_{eq}$ for
all $N$. 

For the reverse inequality $\lim_{N \to \infty} \frac{1}{N} \log \Phi_N^K \geq V_K$ one needs a generalization of the
Siciak-Zaharjuta theorem that $V_K(z) = \sup_N \frac{1}{N} \log \Phi_N^K(z). $ This requires the construction of sections $s_N$ saturating the lower bound asymptotically. In \cite[Theorem 3.7]{Ber09a}, which takes a somewhat different route, the lower bound is proved using the Ohsawa-Takegoshi extension theorem (see \cite[Lemma 5.2]{Ber09a} to construct global sections $s_N$ which satisfy the desired lower bound at one point. 

In \cite[Theorem 6.2]{GZ05}, Guedj-Zeriahi prove a Siciak-Zahajuta theorem on line bundles
over \kahler manifolds which is valid for any continuous weight (see also
\cite[Proposition 2.9]{Bay16}). \footnote{Thanks to Turgay Bayraktar for the reference and explanations of
this point.}

\end{proof}

  It follows (see   Theorem 1.4  of \cite{Ber09}) that one has:

 \begin{theo}  \begin{equation} \label{MALIM} \left( dd^c \log
|B_{h^N}(z,z)| \right)^m/m! \to \mu_{\phi}. \end{equation}

\end{theo}

We also need the following off-diagonal asymptotics of the Bergman kernel
given in 
 Theorem 1.7  of \cite{Ber09} (see also Theorem 2.4 of \cite{Ber06} where
 an additional assumption is made).

\begin{theo} \label{BERMANOFFD}   Let $L \to M$ be an ample line bundle, let $h$ be a smooth Hermitian metric and let $\omega_N$ be a smooth volume form,

\begin{equation} N^{-m}  | B_{h^N}(z,w)|^2 \omega_N(z) \wedge \omega_N(w) \to
\Delta \wedge d\mu_{\phi}.  \end{equation}

\end{theo}

In the case of postive Hermitian line bundles, $\Pi_{h^N}(z,w)$ decays rapdily off the diagonal.  The above gives a weak generalization to more general Hermitian metrics. It also gives the second moment part of the `Szego limit
theorem' due in the positive Hermitian case to Boutet de Monvel and Guillemin 
\cite{BoGu81}. It is not clear to the author whether Theorem \ref{BERMANOFFD} has been, or can be, generalized to $(h, \nu)$ where
$\nu$ is only assumed to be Bernstein-Markov.

\section{Proof of  Theorem \ref{QETH}}

\begin{prop} \label{QEPROP} Let $(L,h) \rightarrow (M, \omega)$ be a 
Hermitian holomorphic line bundle over a \kahler manifold $M$, with $h \in C^2$ and let $\nu$ be a Bernstein-Markov measure. Let $s_N \in H^0(M, L^{N})$, $N=1,2,\ldots$, be
a quantum ergodic sequence of  sections in the sense of Definition \ref{QEDEF}.  Then $\frac{1}{N} \log
||s_N||_{h^N} \to \phi_{eq} $ in $L^1(M,dV)$ where $dV$ is any fixed
volume form.
\label{W*}\label{mainlemma}\end{prop}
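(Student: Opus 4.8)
The plan is to adapt the argument of \cite{ShZ99} from the positive line-bundle case, substituting the non-standard Bergman kernel asymptotics of Theorems \ref{LOGBERG} and \ref{Ber1} for the positive-curvature ones, and adding one potential-theoretic step to account for the fact that the equilibrium measure $\mu_\phi={\rm MA}(\phi_{eq})$ is now supported only on $D\cap M(0)$ rather than on all of $M$. Normalize so that $\|s_N\|_{{\rm Hilb}_N(h,\nu)}=1$ and write $s_N=f_N e_L^N$ in a local frame, so that $u_N=\frac1N\log|s_N|^2_{h^N}$ \eqref{uNdef} equals $\frac1N\log|f_N|^2-\phi$ and, by \Poincare--Lelong, $dd^c u_N=\frac1N[Z_{s_N}]-dd^c\phi\ge -dd^c\phi$; thus the $u_N$ form a quasi-psh family whose $dd^c$ is bounded below by the fixed smooth form $-dd^c\phi$. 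The target is $u_N\to\phi_{eq}-\phi$ in $L^1(M,dV)$, equivalently $\frac1N\log\frac{|f_N|^2}{\|s_N\|^2}\to\phi_{eq}$, which is the assertion (compare \eqref{fN}). First I would prove the upper bound: by the extremal property \eqref{EXTREMAL}, $|s_N(z)|^2_{h^N}\le\Pi_{h^N,\nu}(z)$, so $u_N\le\frac1N\log\Pi_{h^N,\nu}(z)=\frac1N\log B_{h^N,\nu}(z)-\phi$, and Theorem \ref{LOGBERG} gives $\frac1N\log B_{h^N,\nu}\to\phi_{eq}$ uniformly. Hence $\limsup_N u_N\le\phi_{eq}-\phi$ uniformly, and in particular the $u_N$ are uniformly bounded above.

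Next I would use compactness of quasi-psh families: every subsequence of $\{u_N\}$ has a further subsequence either converging in $L^1(M)$ to a quasi-psh $u_\infty\le\phi_{eq}-\phi$, or tending uniformly to $-\infty$ on $M$. The second alternative is ruled out by Definition \ref{QEDEF} with test function $f\equiv1$: since $e^{Nu_N}=|s_N|^2_{h^N}$, one has $\int_M e^{Nu_N}\,d\nu=\|s_N\|^2_{{\rm Hilb}_N}=1$, whereas $u_{N_k}\to-\infty$ would force this integral to $0$. For a genuine limit $u_\infty$, the crux is to show $u_\infty=\phi_{eq}-\phi$ on $\supp\mu_\phi\subset D\cap M(0)$. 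Suppose instead that $u_\infty<\phi_{eq}-\phi$ on a set of positive $\mu_\phi$-measure; as $u_\infty$ is upper semicontinuous and $\phi_{eq}-\phi$ continuous, the set $U_\delta=\{(\phi_{eq}-\phi)-u_\infty>2\delta\}$ is open and carries positive $\mu_\phi$-mass for some $\delta>0$. Choose $0\le f\in C(M)$ with compact support in $U_\delta$ and $\int f\,d\mu_\phi>0$. By Hartogs' lemma $u_{N_k}\le(\phi_{eq}-\phi)-\delta$ on $\supp f$ for large $k$; since moreover $\phi_{eq}\le\phi$ on $K=\supp\nu$ (from the envelope \eqref{EQPOT}, off a $\nu$-null pluripolar set), we get $u_{N_k}\le-\delta$ $\nu$-a.e.\ on $\supp f$, whence $\int f\,e^{Nu_{N_k}}\,d\nu\le e^{-N_k\delta}\int f\,d\nu\to0$. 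This contradicts Definition \ref{QEDEF}, which forces $\int f\,e^{Nu_{N_k}}\,d\nu\to\int f\,d\mu_\phi>0$. Therefore $u_\infty\ge\phi_{eq}-\phi$ $\mu_\phi$-a.e., and with the upper bound $u_\infty=\phi_{eq}-\phi$ $\mu_\phi$-a.e.

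Finally I would propagate this equality to all of $M$ by the domination principle for the Monge--Ampère operator. Both $\phi_{eq}-\phi$ and $u_\infty$ are quasi-psh with respect to the same form $dd^c\phi$, and ${\rm MA}(\phi_{eq}-\phi)=(dd^c\phi_{eq})^m=m!\,\mu_\phi$ has full mass \eqref{EQMDEF}; since $\phi_{eq}-\phi\le u_\infty$ holds ${\rm MA}(\phi_{eq}-\phi)$-a.e., the domination principle yields $\phi_{eq}-\phi\le u_\infty$ everywhere, and combined with the upper bound gives $u_\infty=\phi_{eq}-\phi$. As every subsequential limit is this single function and the $-\infty$ case is excluded, the full sequence converges, $u_N\to\phi_{eq}-\phi$ in $L^1(M,dV)$, which is the claim. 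I expect the main obstacle to be precisely this last step together with the concentration argument feeding it: one must know that $\phi_{eq}=V^*_{h,K}$ is continuous (so that $U_\delta$ is open and the Hartogs comparison is clean) and that the domination principle is available for a merely $C^2$ weight and a Bernstein--Markov measure; and one must carefully match the weak-$*$ convergence of Definition \ref{QEDEF} with the upper-semicontinuous regularization of $u_\infty$, via Hartogs' lemma, so that mass genuinely escapes wherever $u_\infty<\phi_{eq}-\phi$. This is the new ingredient beyond \cite{ShZ99}, where $\supp\mu_\phi=M$ and no domination principle was needed.
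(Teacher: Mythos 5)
Your proposal tracks the paper's own proof quite closely through its main stages --- normalization, precompactness of the quasi-psh family $\{u_N\}$, exclusion of the $-\infty$ alternative via $\int_M e^{Nu_N}\,d\nu=1$, and the Hartogs-plus-weak-$*$ contradiction --- and you have correctly put your finger on the genuine difficulty that the paper glosses over: that contradiction only bites on open sets charged by $\mu_\phi$ (in the paper's Lemma \ref{RAT}, integrating \eqref{ODDBOUND} against a test function supported in $U''$ yields nothing unless $\mu_\phi(U'')>0$), so the argument pins down the limit only on ${\rm supp}\,\mu_\phi$ and something more is needed to get $L^1(M,dV)$ convergence on all of $M$. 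The gap is in your final step: the domination principle is invoked backwards. Its correct form concludes $\phi\le\psi$ everywhere from $\phi\le\psi$ holding almost everywhere with respect to ${\rm MA}(\psi)$, the Monge--Amp\`ere measure of the \emph{dominating} function $\psi$. You have $\phi_{eq}\le v_\infty:=u_\infty+\phi$ only ${\rm MA}(\phi_{eq})$-a.e., i.e.\ with respect to the Monge--Amp\`ere measure of the function you are trying to show is the \emph{smaller} one. That implication is false, and no correct tool can replace it here, because the statement itself fails at this level of generality.

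Concretely, take $M=\CP^1$, $L=\ocal(1)$, $h$ the Fubini--Study metric and $\nu$ normalized arc length on the unit circle, which is Bernstein--Markov for $K=S^1$. Then $\phi_{eq}(z)=\log 2+2\log^+|z|$ in the affine frame and $d\mu_\phi=(dd^c\phi_{eq})$ is the uniform measure on $S^1$. The sections $s_N=2^{N/2}z^N e_L^N$ satisfy $|s_N|^2_{h^N}\,d\nu\equiv d\mu_\phi$ exactly, hence form a quantum ergodic sequence in the sense of Definition \ref{QEDEF}; yet $\frac1N\log|f_N|^2=\log 2+2\log|z|$ identically, which agrees with $\phi_{eq}$ precisely on ${\rm supp}\,\mu_\phi=S^1$ (consistent with your penultimate step) but is strictly smaller on the whole unit disk, a set of positive volume. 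So $v_\infty\le\phi_{eq}$ with equality ${\rm MA}(\phi_{eq})$-a.e.\ does \emph{not} force equality everywhere, and the conclusion of the Proposition fails: quantum ergodicity constrains $|s_N|^2_{h^N}$ only on ${\rm supp}\,\nu$ and supplies no lower bound for the section off it (this is exactly the lower-bound input that randomness, or the Siciak--Zaharjuta saturation in Theorem \ref{LOGBERG}, provides and that a deterministic quantum ergodic sequence need not). Any repair must either restrict $\nu$ (e.g.\ to volume forms, with a separate argument off ${\rm supp}\,\mu_\phi$) or weaken the conclusion to convergence on ${\rm supp}\,\mu_\phi$.
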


\begin{proof}

Let $s_N \in H^0(M, L^{N})$, $N=1,2,\ldots$ be quantum ergodic. We write
$$u_N(z) =\frac{1}{N} \log \frac{|s_{N}(z)|_{h^N}}{||s_N||_{{\rm Hilb}_N(h, \nu)}}\;.$$
Henceforth we assume $||s_N||_{{\rm Hilb}_N(h, \nu) }= 1$.
Let $e_L$ be a local holomorphic frame for $L$ over $U \subset M$
and let $e_L^{N}$ be the corresponding frame for $L^{N}.$ Let
$\phi (z) =- \log \|e_L(z)\|_{h}$ so that $\|e_L^{N}(z)\|_{h_N} = e^{-
N \phi} $. Then we may write $s_N = f_N e_L^{N}$ with $f_N \in
{\mathcal O}(U)$ and $\|s_N\|_{h_N} = |f_N| e^{- N \phi}$. It is equivalent 
to  consider the locally defined psh functions  $$v_N = \frac{1}{N}\log |f_N| =
u_N + \phi\;,$$ on $U$.

We wish to show that $u_N\to \phi_{eq}$ in $L^1(M)$. 
To prove this we first observe that $\{u_N\}$ is a pre-compact
sequence of quasi-psh (pluri-subharmonic) functions in $L^1$. The sequence  satisfies:
\begin{itemize} \item[\rm i)] the functions $u_N$ are
uniformly bounded above on $M$; \item[\rm ii)] \ $\limsup_{N
\rightarrow \infty} u_N\leq 0$. \item[\rm iii)] $u_N$ do not tend
to $-\infty$ uniformly on $M$.
\end{itemize} To prove (iii) we note that since $\|s_N\|_{h^N}^2d\nu$ converges weakly to ${\bf 1}_{D \cap M(0)}  (dd^c \phi)^m(z)/m!$, we have
\begin{equation} \label{MASS} \int_M\|s_N(z) \|^2_{h^N}d\nu \to \int_M {\bf 1}_{D \cap M(0)}  \rm{MA} (\phi)(z).\end{equation} If (iii) were to hold, the left side would tend to $0$. Indeed,  there would exist $T > 0$ such that for $k \geq T$,
\begin{equation}\frac{1}{N_k} \log \| s^{N_k}(z)\|_{h_{N_k}} \leq -
1.\label{firstposs}\end{equation} However, (\ref{firstposs}) implies
that $$ \| s_{N_k}(z)\|_{h^{N_k}}^2 \leq e^{- 2
N_k}\;\;\;\;\forall z \in U'\;, $$ which is inconsistent with \eqref{MASS}.

To prove (i) - (ii), choose orthonormal
bases $\{S^N_j\}$ and write $s_N=\sum_j a_j S^N_j$, so that $\sum
|a_j|^2 = ||s_N||_{L^2}^2$. By (\ref{BM}), we have $$\| s_N (z)\|_{h^N}^2
\leq C_{\epsilon} e^{ \epsilon N} ||s_N||^2_{{\rm Hilb}_N(h, \nu)}.$$
Taking the logarithm gives (i)-(ii) and therefore (i) - (iii).

Since $|s_N(z)|^2_{h^N} \leq \Pi_{h^N, \nu}(z)$ it makes sense to study the ratio
as a scalar function,
\begin{equation} \label{LEQ1} \frac{|s_N(z)|^2_h}{  \Pi_{h^N, \nu}(z)} \leq 1. \end{equation}
Moreover, it is the Radon-Nikodym derivative of $|s_N(z)|^2_h d \nu$
with respect to $\Pi_{h^N, \nu} (z) d\nu.$

The final step in the proof of Proposition \ref{QEPROP} is the following

\begin{lem} \label{RAT} If $\{s_N\}$ is a quantum ergodic sequence, then  
\begin{equation} \label{LOGRATIO} \frac{1}{N}\log \frac{|s_N(z)|^2_h}{  \Pi_{h^N, \nu}(z)} \to 0 \end{equation} in $L^1(M, dV)$.
\end{lem}

Let
$U'$ be a relatively compact, open subset of $U$.
By (i)-(iii), both  $\frac{1}{N}\log |S_j(z)|^2_h$ and $\frac{1}{N}\log  \Pi_{h^N}(z)$ are precompact in $L^1(M)$, and in the latter case the limit
is given in Theorem \ref{LOGBERG}. 
Thus,  it follows by  a standard result on subharmonic functions that   a sequence $\{v_{N_k}\}$
satisfying (i)-(iii)  has a subsequence which is convergent in
$L^1(U')$.  We choose a subsequence of
indices $(N, j)$ so that a unique $L^1$ limit of the log-ratio \eqref{LOGRATIO} exists, and prove that it
must equal $0$.
By \eqref{LEQ1}, any limit of  \eqref{LOGRATIO}  must be $\leq 0$.

We denote the log ratio by $$w_N = v_N -  \frac{1}{N} \log \Pi_{h^N, \nu}(z). $$ Then there  exists a subsequence  $\{w_{N_k}\}$, which converges in $L^1(U')$ to some $w
\in L^1(U').$  By passing if necessary to a further subsequence,
we may assume that $\{w_{N_k} \}$ converges pointwise almost
everywhere  in $U'$ to $w$, and hence
$$w(z) = \limsup_{k \rightarrow \infty} [u_{N_k}(z) + \phi -  \frac{1}{N_k} \log \Pi_{h^{N_k}, \nu}(z)]  \;\;\;\;\;\;{\rm (a.e)}\;.$$ Now let
$$v^*(z):= \limsup_{w \rightarrow z} v(w)  $$ be the
upper-semicontinuous regularization of $v$. Then $v^*$ is
plurisubharmonic on $U'$ and $v^* = v$ almost everywhere.

Assuming that $w \not= 0$,  there exists
$\epsilon
> 0$, so that the open set $ U_\epsilon = \{z\in U': w^* <  - \epsilon\} $ is
non-empty.
 Let $U''$ be a non-empty, relatively compact, open
subset of $ U_\epsilon$; by Hartogs' Lemma, there exists a
positive integer $T$ such that 
 \begin{equation}\label{ODDBOUND}
\|s_{N_k}(z)\|_{h^{N_k}}^2 \leq e^{- \epsilon N_k } N_k^{-m} \Pi_{h^{N_k}, \nu}(z),\;\;\;\;\;z\in
U'', \;\;k \geq T.
\end{equation}

By Theorem \ref{Ber1} and Theorem \ref{BBWNTH},   $N^{-m} \Pi_{h^N, \nu} (z) d\nu  \to d\mu_{\phi} = {\bf 1}_{D \cap M(0)}   (dd^c \phi_{eq})^m(z)/m!$ weak*.  Applying this to the right side of \eqref{ODDBOUND}  contradicts the weak convergence of the left side to $ {\bf 1}_{D \cap M(0)}  \det (dd^c \phi)(z)$ and completes the proof of  Lemma \ref{RAT}.

To complete the proof of Proposition \ref{QEPROP}, we observe that by Lemma \ref{RAT} and Theorem \ref {LOGBERG},
$$\begin{array}{lll} u_N(z) =\frac{1}{N}\log |s_N(z)|^2_{h^N} & = &  \frac{1}{N} \log  \Pi_{h^N, \nu}(z) + o(1) \\ &&\\
& = & \phi_{eq} + o(1), \end{array}$$
where the remainder is measured in $L^1(M, dV)$, proving 
 Proposition \ref{QEPROP} and Theorem \ref{QETH}.

\end{proof}

\section{Quantum ergodicity of random sequences: Proof of Theorem \ref{RANSEQ}}

There are several different ways to formulate the statement that random 
$L^2$ normalized sequences $\{s_N\}$ are quantum ergodic. In this
section we  prove the result using the Kolmogorov strong law of large numbers. This gives a weaker result than in the positive Hermitian line
bundle case of \cite{ShZ99}, where the variance estimate is good enough to allow us to 
apply Borel-Cantelli to prove almost sure convergence. But the approach here does  not seem to have  been used before, and so we present it here. It is quite close to the study of variances of linear statistics
of zeros in \cite{ShZ99,ShZ08}. The result is somewhat weaker than for random 
orthonormal bases of sections but the details of the proof are somewhat different although in the end the key point is to study certain quantum variances.  

\subsection{Proof of almost sure quantum ergodicity of sequences}

In this section we consider random sequences $\{s_N\} \in \scal$ in
the spherical model \eqref{SPHERICAL}. We identify a random section
in $H^0(M, L^N)$
with the associated coefficients $\vec c$ relative to a fixed orthonormal basis. 

In the notation \eqref{gaussian},  for $a \in C^{\infty}(M)$ we consider the random variables,
\begin{equation} \label{XNc} X_N^a(\vec c):=  \int_M a(z) |\langle \vec c, \vec S^N(z) \rangle|^2 d \nu(z). \end{equation}
Quantum ergodicity has to do with the variances,
\begin{equation} \label{VARXN} {\rm Var}(X_N^a) = \E_N \left| X_N^a(\vec c) - \int_M a \; {\rm MA}(\phi_{eq}) \right|^2, \end{equation}
(see \eqref{MADEF} for the notation).

By Lemma \ref{EMASS}, the  expected value is given by
\begin{equation} \label{EXNc} \E_N
X_N^a(\vec c)= \int_M a(z) \Pi_{h^N, \nu} (z) d\nu(z) \end{equation}
By Theorem \ref{BBWNTH} (see \eqref{MALIM2}),
\begin{equation} \label{EXNcAs} \lim_{N \to \infty}  \E_N
X_N^a(\vec c)=\int_M a(z)  {\bf 1}_{D \cap M(0)}  \det (dd^c \phi)(z) =\int_M a  {\rm MA}(\phi_{eq})). \end{equation}

We have
the following estimate of the variance:

\begin{lem} \label{variance} Let
$a \in C^{\infty}(M)$.   Then there exist constants $\alpha, \beta$ (see \eqref{G}) so that
\begin{equation} \label{E3} \E_N\left((X_N^a)^2\right) = \beta
\int_M \int_M (a(z)) (a(w))  d\nu(z)
d\nu(w) +  (\alpha - \beta) \int_{M}a(z) a(w)
\Delta \wedge d\mu_{\phi}
\end{equation} 
and so
$${\rm Var}(X_N^a) =
O(1).$$
\end{lem}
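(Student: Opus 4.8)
The plan is to compute the second moment $\E_N\left((X_N^a)^2\right)$ explicitly by expanding $X_N^a$ in terms of the Gaussian (or spherical) coefficients $\vec c$ and applying Wick's formula. First I would write
$$
(X_N^a)^2 = \int_M \int_M a(z) a(w)\, |\langle \vec c, \vec S^N(z)\rangle|^2\, |\langle \vec c, \vec S^N(w)\rangle|^2\, d\nu(z)\, d\nu(w),
$$
so that the task reduces to computing the fourth-order moment $\E_N\left(|\langle \vec c, \vec S^N(z)\rangle|^2\, |\langle \vec c, \vec S^N(w)\rangle|^2\right)$ pointwise. Using the Gaussian characterization \eqref{normalized} (the spherical case differs only by normalization constants, which will produce the two coefficients $\alpha, \beta$), the Wick/isometry formula for fourth moments of complex Gaussians gives a sum of two pairings: one factoring into $\E|\langle \vec c, \vec S^N(z)\rangle|^2 \cdot \E|\langle \vec c, \vec S^N(w)\rangle|^2$ and one producing the off-diagonal cross term $|\E\langle \vec c, \vec S^N(z)\rangle \overline{\langle \vec c, \vec S^N(w)\rangle}|^2$. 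By Lemma \ref{EMASS} the diagonal factors are proportional to $\Pi_{h^N, \nu}(z)$ and $\Pi_{h^N, \nu}(w)$, and the cross term is proportional to $|\Pi_{h^N, \nu}(z,w)|^2$, equivalently $|B_{h^N, \nu}(z,w)|^2 e^{-N\phi(z)} e^{-N\phi(w)}$.

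Next I would substitute these two pieces back into the double integral. The factored (diagonal-squared) piece yields, after invoking Theorem \ref{BBWNTH} for each factor, the term $\beta \int_M \int_M a(z) a(w)\, d\nu(z)\, d\nu(w)$ up to the normalization constant, exactly as in the first term of \eqref{E3}. The cross term yields $(\alpha - \beta)$ times a double integral against $|B_{h^N, \nu}(z,w)|^2$; here is where I invoke the off-diagonal asymptotics of Theorem \ref{BERMANOFFD}, which states precisely that $N^{-m}|B_{h^N}(z,w)|^2\, \omega_N(z)\wedge\omega_N(w) \to \Delta \wedge d\mu_{\phi}$, giving the second term $(\alpha-\beta)\int_M a(z) a(w)\, \Delta \wedge d\mu_{\phi}$. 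Tracking the precise constants $\alpha$ and $\beta$ through the Gaussian-versus-spherical normalizations and the factors of $d_N = N^m(1 + O(1/N))$ is routine bookkeeping that pins down \eqref{E3}.

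Finally, for the variance bound I would combine \eqref{E3} with the square of the first-moment asymptotic \eqref{EXNcAs}. Writing $\var(X_N^a) = \E_N((X_N^a)^2) - (\E_N X_N^a)^2$, the leading diagonal contribution $\beta \left(\int_M a\, d\nu\right)^2$ should cancel against $(\E_N X_N^a)^2$ up to lower-order terms, since both arise from the same factored structure; what survives is the off-diagonal contribution together with subleading corrections, and one checks these are $O(1)$ rather than growing. The main obstacle I anticipate is twofold: first, keeping the normalization constants straight so that the cancellation in the variance is genuine and the residual is controlled, and second—more substantively—verifying that the off-diagonal term controlled by Theorem \ref{BERMANOFFD} is merely $O(1)$ and does not itself blow up. Because Theorem \ref{BERMANOFFD} gives only weak-* convergence of $N^{-m}|B_{h^N}(z,w)|^2$ rather than pointwise Gaussian-type off-diagonal decay (as one has in the positive Hermitian case), one cannot expect the sharper $o(1)$ variance decay of \cite{ShZ99}; the weak convergence suffices only for an $O(1)$ bound, which is exactly why the conclusion here is $\var(X_N^a) = O(1)$ and why the subsequent argument must rely on the Kolmogorov strong law rather than on Borel–Cantelli.
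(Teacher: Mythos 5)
Your proposal is correct and follows essentially the same route as the paper: both expand $\E_N\left((X_N^a)^2\right)$ into a double integral of the fourth moment of the Gaussian field, reduce that moment to a diagonal term plus a cross term in the normalized Bergman kernel $P_N(z,w)$ (your Wick pairing is exactly the paper's explicit computation $G(\cos\theta)=\beta+(\alpha-\beta)\cos^2\theta$, with $\alpha=2$, $\beta=1$), and then invoke the off-diagonal asymptotics of Theorem \ref{BERMANOFFD} for the cross term. Your closing remark that the weak-* convergence only yields $\var(X_N^a)=O(1)$, forcing the Kolmogorov strong law rather than Borel--Cantelli, is precisely the point the paper makes.
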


\begin{proof} We  let $f_j$ be a local representation of $S_{N,j}$
and let $\vec f$ be a local representation of $\vec S_N$. Then,

\begin{equation} \E_N\left((X_N^a)^2\right) = 
\int_M \int_M (a(z)) (a(w)) \left( \int_{S^{2d_N-1}}  |\langle \vec
f(z), \vec c \rangle|^2  |\langle \vec  f(w), \vec c \rangle|^2 d\mu_N(\vec c)\right) d\nu(z) d\nu(w).
\end{equation} 
Boundedness follows by the Schwartz inequality.
\end{proof}

We then write $\vec  f= |\vec f | u$ with $|u|\equiv
1.$ Then \begin{eqnarray*} |\langle \vec
f(z), \vec c \rangle|^2  |\langle \vec  f(w), \vec c \rangle|^2 &=& |\vec f(z)|^2
|\vec f(w)|^2 |\langle \frac{\vec
f(z)}{|\vec f(z)|}, \vec c \rangle|^2  |\langle  \frac{\vec  f(w)}{|\vec f(w) |}, \vec c \rangle|^2 \\ &&\\
& = & \Pi_{h^N, \nu}(z) \Pi_{h^N, \nu} (w) e^{- N(\phi(z) + \phi(w))}  |\langle \frac{\vec
f(z)}{|\vec f(z)|}, \vec c \rangle|^2  |\langle  \frac{\vec  f(w)}{|\vec f(w) |}, \vec c \rangle|^2.\end{eqnarray*} 
Let 
$\vec u_N(z) =  \frac{\vec
f(z)}{|\vec f(z)|}. $
Thus it suffices to calculate 
$$\E_N  |\langle \vec u_N(z), \vec c \rangle|^2  |\langle  \vec u_N(w), \vec c \rangle|^2 : = \E_N |Y_1|^2 |Y_2|^2, $$
where $Y_1=\langle c, \vec u_N(x)\rangle$,  $Y_2=\langle c,
\vec u_N(y)\rangle$.
To determine $\E(Y_1\overline Y_2)$, we note that for a random $s = s_N =
\sum c_j  S^N_j
\in H^0_N(M, L^N)$,
\begin{equation} \label{EPi} \E\left( s(z)\,\overline{s(w)}\right)
= \sum_{j,k=1}^{d_N} \E(c_j\bar c_k) \, S^N_j(z)\,\overline{
S^N_k(w)} = \sum_{j=1}^{d_N} S^N_j(z)\,\overline{
S^N_j(w)} =\Pi_{h^N, \nu}(z,w)\;.\end{equation}
For simplicity of notation we denote $\Pi_{h^N, \nu}$ by $\Pi_N$ in the remainder of the proof. 

Since $$\langle \vec c,\vec u_N(x)\rangle = \frac {\langle \vec c,\Psi_N(x)
\rangle} {\big|\wh \Psi_N(x)\big|}  = \frac {
s_N(z)}{\Pi_N(z,z)^{1/2}}\,,$$ we have by \eqref{EPi},
$$\E(Y_1\overline Y_2) = \frac
{\Pi_N(z,w)}{\Pi_N(z,z)^{1/2}\Pi_N(w,w)^{1/2}} : = P_N(z,w)\,.$$

\begin{lem} \label{varintb} Let $(Y_1,Y_2)$ be joint complex Gaussian
random variables with  mean 0 and  $\E(|Y_1|^2)= \E(|Y_2|^2)=1$.  Then
$$\E\big( |Y_1|^2\,  |Y_2|^2\big) = G\big(\left|\E(Y_1\overline
Y_2)\right|\big)\;,$$ where \begin{equation} \label{G} G(\cos \theta ):= \beta + (\alpha - \beta) \cos^2 \theta,
\end{equation} for certain universal $\alpha > \beta > 0$ \eqref{alphabeta}.\end{lem}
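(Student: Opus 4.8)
The plan is to recognize $G$ as the fourth absolute moment of a centered, jointly complex Gaussian pair, which under the standing hypotheses is determined entirely by the single covariance $\rho := \E(Y_1 \overline{Y_2})$, and then to evaluate it by a direct computation. The key structural fact I would use is that $(Y_1,Y_2)$, being a linear image of the circularly symmetric coefficient vector $\vec c$ (for which $\E(c_j c_k) = 0$), satisfies $\E(Y_1^2) = \E(Y_2^2) = \E(Y_1 Y_2) = 0$; together with $\E(|Y_1|^2) = \E(|Y_2|^2) = 1$ this pins down the joint law up to the value of $\rho$.

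First I would normalize the phase of $\rho$: writing $\rho = |\rho| e^{i\psi}$ and replacing $Y_2$ by $e^{i\psi} Y_2$ leaves $|Y_2|^2$, its expectation, and the Gaussian and circular-symmetry properties unchanged, while making the covariance real and nonnegative. Setting $|\rho| = \cos\theta \in [0,1]$, it then suffices to treat the case $\E(Y_1\overline{Y_2}) = \cos\theta$, with $\theta \in [0,\pi/2]$ so that $\sin\theta \geq 0$.

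Next I would realize the pair explicitly as $Y_1 = \xi_1$ and $Y_2 = (\cos\theta)\,\xi_1 + (\sin\theta)\,\xi_2$, where $\xi_1,\xi_2$ are independent standard complex Gaussians with $\E(|\xi_i|^2) = 1$; one checks this reproduces the prescribed second moments, since $\E(Y_1\overline{Y_2}) = \cos\theta\,\E(|\xi_1|^2) = \cos\theta$ and $\E(|Y_2|^2) = \cos^2\theta + \sin^2\theta = 1$. Expanding $|Y_2|^2 = \cos^2\theta\,|\xi_1|^2 + \sin^2\theta\,|\xi_2|^2 + (\text{cross terms})$, multiplying by $|Y_1|^2 = |\xi_1|^2$, and taking expectations, the cross terms vanish by independence and $\E(\xi_2) = 0$, leaving $\E(|Y_1|^2|Y_2|^2) = \cos^2\theta\,\E(|\xi_1|^4) + \sin^2\theta\,\E(|\xi_1|^2)\E(|\xi_2|^2)$. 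Using the standard values $\E(|\xi|^2) = 1$ and $\E(|\xi|^4) = 2$ for a standard complex Gaussian yields $G(\cos\theta) = 2\cos^2\theta + \sin^2\theta = 1 + \cos^2\theta$, exactly the asserted form with the universal constants $\beta = 1$ and $\alpha = 2$, so that $\alpha > \beta > 0$.

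There is no genuine obstacle beyond bookkeeping; the only point needing care is the circular-symmetry hypothesis $\E(Y_1 Y_2) = 0$, which is precisely what makes the joint law depend on $\rho$ alone and what licenses the two-term Wick (Isserlis) decomposition $\E(Y_1\overline{Y_1}Y_2\overline{Y_2}) = \E(|Y_1|^2)\E(|Y_2|^2) + |\E(Y_1\overline{Y_2})|^2$. I would record that this holds in the application because $\vec c$ is circularly symmetric. Alternatively, one can skip the explicit realization entirely and simply invoke this Wick identity to read off $G(\cos\theta) = 1 + \cos^2\theta$ at once.
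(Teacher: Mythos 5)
Your proposal is correct and follows essentially the same route as the paper: after normalizing the phase of $\E(Y_1\overline{Y_2})$, you realize the pair as $Y_1=\xi_1$, $Y_2=(\cos\theta)\xi_1+(\sin\theta)\xi_2$ with $\xi_1,\xi_2$ independent standard complex Gaussians, which is exactly the paper's decomposition. The only difference is that you evaluate the resulting expectation by independence and the standard moments $\E|\xi|^2=1$, $\E|\xi|^4=2$ (equivalently the complex Wick identity), whereas the paper grinds out the same Gaussian integral in polar coordinates; both give $G(\cos\theta)=1+\cos^2\theta$, i.e. $\alpha=2$, $\beta=1$, consistent with \eqref{alphabeta}, and your explicit remark that circular symmetry ($\E(Y_1Y_2)=0$) is what reduces the joint law to the single parameter $\rho$ is a point the paper leaves implicit.
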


\begin{proof}  By replacing $Y_1$ with $e^{i\al}\,Y_1$, we can assume without
loss of generality that $\E(Y_1\overline Y_2)\ge 0$.   We can write
\begin{eqnarray*} Y_1 &=& \Xi_1\;,\\ Y_2 &=& (\cos\theta)\,\Xi_1 +
(\sin\theta)\,\Xi_2\,,
\end{eqnarray*} where $\Xi_1,\Xi_2$ are independent joint complex Gaussian
random variables with mean 0 and variance 1, and $\cos\theta=\E(Y_1\overline
Y_2)$.  Then (cf. \eqref{G}),
\begin{equation}\label{Elog} \E( |Y_1|^2  |Y_2|^2) =
G(\cos\theta)\;,\end{equation} where \begin{equation}\label{G2}
G(\cos\theta)=
\frac{1}{\pi^2} \int_{\C^2}  
|\Xi_{1}|^2\,\left|\Xi_1\cos\theta + \Xi_2\sin\theta
\right|^2\,e^{-(|\Xi_1|^2 + |\Xi_2|^2)}\, d\Xi_1
\,d\Xi_2\;.\end{equation}
We now verify that $G$ is given by \eqref{G}.
This is an elementary Gaussian calculation, but for the sake of completeness we go through the details.

Write
$\Xi_1 = r_1 e^{i \al},\ \Xi_2= r_2 e^{i(\al+\phi)}$, so that \eqref{G}
becomes
\begin{equation*}
G(\cos\theta)=\frac{2}{\pi}\int_0^{\infty} \int_0^{\infty}\int_0^{2\pi}
r_1 r_2
e^{-(r_1^2 + r_2^2)}  r_1^2  |r_1 \cos\theta+ r_2 e^{i
\phi}\sin\theta|^2\, d\phi\, dr_1\, dr_2\;.\end{equation*}  Evaluating the
inner integral, we obtain
\begin{equation*}  \int_0^{2\pi} |r_1 \cos \theta + r_2 \sin \theta e^{i
\phi}|^2  \,d\phi = 2\pi( r_1^2 \cos^2 \theta + r_2^2 \sin^2 \theta).  \end{equation*} Hence \begin{equation*} G(\cos\theta) =  2 \pi
\int_0^{\infty} \int_0^{\infty} r_1 r_2 e^{-(r_1^2 + r_2^2)} r_1^2  (r_1^2 \cos^2 \theta + r_2^2 \sin^2 \theta)\,dr_1 \,dr_2. \end{equation*}
We make the change of variables  $r_1 = \rho \cos \phi,\, r_2 = \rho
\sin\phi$
to get \begin{equation*} \begin{array}{lll} G(\cos\theta) & = &  
 4 \int_0^{\infty}
\int_0^{\pi/2} \rho^3 e^{-\rho^2} \rho^2 \cos^2 \phi)  (\rho^2
\cos^2 \phi \cos^2 \theta + \rho^2 \sin^2 \phi \sin^2 \theta)   \cos\phi\sin \phi
\,d\phi \,d\rho
\\ && \\ & = & 4 \cos^2 \theta \int_0^{\infty}
\int_0^{\pi/2} \rho^7 e^{-\rho^2}  \cos^5 \phi    \sin \phi
\,d\phi \,d\rho \\&&\\
& + &  4 \sin^2 \theta \int_0^{\infty}
\int_0^{\pi/2} \rho^7 e^{-\rho^2} \cos^3 \phi
 (1 - \cos^2 \phi) \sin \phi
\,d\phi \,d\rho \\ \\
& = & \alpha \cos^2 \theta +\beta \sin^2 \theta = \beta + (\alpha - \beta) \cos^2 \theta \;. \end{array} \end{equation*} 
where
\begin{equation} \label{alphabeta} \alpha = A \int_0^{\pi/2}   \cos^5 \phi    \sin \phi
\,d\phi = \frac{A}{6}, \;\; \beta = A \int_0^{\pi/2}  \cos^3 \phi
 (1 - \cos^2 \phi) \sin \phi d \phi = A (\frac{1}{4} - \frac{1}{6}), \end{equation}
 where $A = 4  \int_0^{\infty}
 \rho^7 e^{-\rho^2} d\rho = 2 \int_0^{\infty} x^3 e^{-x} dx = 2 (3!). $
\end{proof}

It follows that 

\begin{equation} \label{E2} \E_N\left((X_N^a)^2\right) = 
\int_M \int_M (a(z)) (a(w)) (\beta + (\alpha - \beta) P_N^2(z,w) ) d\nu(z)
d\nu(w). 
\end{equation} 

We now useTheorem  \ref{BERMANOFFD} to get
$$\int_M P_N^2(z,w) a(z) a(w) d\nu(z) d\nu(w)
\to \int_{M}a(z) a(w)
\Delta \wedge d\mu_{\phi}. $$
Combining this limit formula with \eqref{E2} proves \eqref{E3}, and concludes
the proof of Lemma \ref{varintb}.

\subsection{$4$th moment bounds}

To prove almost sure convergence to zero of the random variables
$$Y_N(\vec c): = \left| X_N^a(\vec c) - \int_M a \; {\rm MA}(\phi_{eq}) \right|^2$$
we need to show that the variances of these random variables are bounded.
The variance of $Y_N$ is a fourth moment and should not be confused
with \eqref{VARXN}.

\begin{lem} $\rm{Var}(Y_N) \leq C. $ \end{lem}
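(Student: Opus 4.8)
The plan is to bound $\mathrm{Var}(Y_N)$ by the fourth moment $\E_N|X_N^a-\mu|^4$, where $\mu=\int_M a\,{\rm MA}(\phi_{eq})$, and to compute that moment by the complex-Gaussian (Wick) calculus already used in Lemma \ref{varintb}, now with four points instead of two. First I would note $\mathrm{Var}(Y_N)\le \E_N(Y_N^2)=\E_N|X_N^a-\mu|^4$, and since by \eqref{EXNcAs} the bias $\E_N X_N^a-\mu\to 0$, it suffices (splitting off the bias, whose cross terms are controlled by the already-bounded second moment of Lemma \ref{variance}) to show that the centered fourth moment $\E_N|X_N^a-\E_N X_N^a|^4$ is $O(1)$.

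Next I would expand. Writing $Y_i=\langle\vec c,\vec u_N(z_i)\rangle$ normalized as in Lemma \ref{varintb}, so that $\E_N|Y_i|^2=1$ and $\E_N(Y_i\overline{Y_j})=P_N(z_i,z_j)$, the four-point analogue of the computation $G(\cos\theta)$ is the Wick/permanent formula
$$\E_N\prod_{i=1}^4 |Y_i|^2=\sum_{\sigma\in S_4}\prod_{i=1}^4 P_N(z_i,z_{\sigma(i)}).$$
Proceeding exactly as in the passage from Lemma \ref{varintb} to \eqref{E2}, but with four points, and integrating this permanent against the same measure $\prod_i a(z_i)\,d\nu(z_i)$ that produced \eqref{E2}, one obtains an expansion of $\E_N((X_N^a)^4)$, and after centering of $\E_N|X_N^a-\E_N X_N^a|^4$, into finitely many terms indexed by the cycle type of $\sigma$. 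Fixed points contribute factors $\int a\,d\nu=O(1)$; single transpositions contribute $\int\!\int a(z)a(w)|P_N(z,w)|^2\,d\nu(z)\,d\nu(w)=O(1)$ by Lemma \ref{variance}; and the genuinely new, fully connected contributions are products and $4$-cycles built from the kernel $P_N$.

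I would bound the connected terms operator-theoretically. Let $\mathcal{P}_N$ denote the self-adjoint integral operator on $L^2(M,d\nu)$ with kernel $P_N(z,w)$, and let $M_a$ be multiplication by $a$. The $4$-cycle contribution is $\operatorname{Tr}\big((M_a\mathcal{P}_N)^4\big)$, and by the Schatten--H\"older inequality together with $\|\cdot\|_{\mathcal{S}_4}\le\|\cdot\|_{\mathrm{HS}}$ one has $\operatorname{Tr}\big((M_a\mathcal{P}_N)^4\big)\le\|M_a\mathcal{P}_N\|_{\mathrm{HS}}^4\le\|a\|_\infty^4\big(\int\!\int|P_N(z,w)|^2\,d\nu(z)\,d\nu(w)\big)^2$. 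The key input is that this Hilbert--Schmidt mass is $O(1)$: taking $a\equiv 1$ in the limit underlying Lemma \ref{variance} (Theorem \ref{BERMANOFFD}) gives $\int\!\int|P_N|^2\,d\nu\,d\nu\to\mu_\phi(M)<\infty$. The remaining disconnected products are likewise bounded by powers of $\|M_a\mathcal{P}_N\|_{\mathrm{HS}}$ and of $\int a\,d\nu$, all $O(1)$, which yields $\mathrm{Var}(Y_N)=O(1)$.

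The main obstacle is the fully connected fourth-order term. In the positive-curvature setting of \cite{ShZ99,ShZ08} one controls it using rapid near-diagonal (Gaussian) decay of the off-diagonal kernel, which is unavailable for a general $C^2$ metric; here one has only the weak $L^2$ information of Theorem \ref{BERMANOFFD}. The point of the argument above is precisely to replace pointwise decay by the Hilbert--Schmidt estimate, so that nothing beyond the finiteness of the limiting off-diagonal mass is needed. A secondary technical point is that the coefficient vector $\vec c$ lives on the unit sphere rather than being genuinely Gaussian, so the permanent formula holds only up to $O(d_N^{-1})$ spherical corrections; these are lower order and are absorbed into the $O(1)$ bound using the equivalence of the spherical and normalized Gaussian ensembles from Section \ref{GFS}.
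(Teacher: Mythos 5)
Your argument is correct, but it takes a far heavier route than the paper, which disposes of the lemma in one line: the reduction $\mathrm{Var}(Y_N)\le \E_N(Y_N^2)=\E_N|X_N^a-\int_M a\,{\rm MA}(\phi_{eq})|^4$ is the same as yours, but at that point the paper simply observes that in the spherical model \eqref{SPHERICAL} the coefficient vector satisfies $|\vec c|=1$, so that $\int_M|\langle\vec c,\vec S^N(z)\rangle|^2\,d\nu(z)=\|s_N\|^2_{{\rm Hilb}_N(h,\nu)}=1$ and hence $|X_N^a(\vec c)|\le \|a\|_{\infty}$ pointwise on the probability space; every moment of $X_N^a$ is then trivially bounded by the corresponding power of $\|a\|_\infty$. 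The spherical normalization that you treat as a nuisance (the $O(d_N^{-1})$ corrections to the Wick/permanent formula) is in fact the whole proof. That said, your expansion is not wasted effort: it is exactly what one would need in the genuine Gaussian ensemble \eqref{gaussian1}, where no pointwise bound is available, and your observation that the fully connected $4$-cycle term $\operatorname{Tr}\bigl((M_a\mathcal{P}_N)^4\bigr)$ can be controlled by the Hilbert--Schmidt norm $\|M_a\mathcal{P}_N\|_{\mathrm{HS}}^4$, using only the weak off-diagonal information of Theorem \ref{BERMANOFFD} rather than the pointwise Gaussian decay available for positive metrics, is a genuinely useful refinement: it yields the structure of the fourth moment (and potentially its decay rate), not merely its boundedness, which is more than the statement requires but more than the paper's argument delivers.
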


\begin{proof} Since 
$\rm{Var}(Y_N) = \E (|Y_N - \E Y_N|^2) = \E Y_N^2 - (E Y_N)^2$
and $\E Y_N \to 0$ it suffices to show that 
$E Y_N^2 =  \E_N \left| X_N^a(\vec c) - \int_M a \; {\rm MA}(\phi_{eq})) \right|^4$ is uniformly bounded. In fact it suffices to show that $\E |X_N^a(\vec c)|^4$
is uniformly bounded, which is a simple calcluation of Gaussian integrals
and is obvious in the spherical model of Section \ref{GFS}.

\end{proof}

It then follows by the Kolmogorov strong law of large numbers that
$$\frac{1}{K} \sum_{N \leq K} Y_N  \to 0, \;\; \rm{almost\;surely}.$$
Since $Y_N > 0$ this is enough to give a subsequence of indices
$N_k$  of density
one for which $Y_{N_k} \to 0$. This concludes the proof of Theorem \ref{RANSEQ}.

   Further remarks on this step are given
in the next section.

 \section{Quantum ergodicity of random orthonormal bases: Proof of Theorem \ref{RANONB} }

In this section we prove that sequences of random ONB's of $H^0(M, L^N)$
are quantum ergodic. The proof follows the same lines as in \cite{ShZ99}, so we mainly emphasize what changes in the proof if we use the general
data $(h, \nu)$ to define ONB's and quantum ergodicity. The main change is in the use of the \szego limit formula, which now has to be applied to Toeplitz operators relative to non-standard Bergman projections. We only need the second moment calculation, which as in the previous section follows from the off-diagonal Bergman kernel asymptotics of Theorem \ref{BERMANOFFD}.

\subsection{\szego limit formulae for Toeplitz operators}

We abreviate $\Pi_{h^N, \nu}$ by $\Pi_N$  define the  Toeplitz operator
with multipler $g \in C(M)$ by  $$T^g_N = \Pi_N
M_g \Pi_N = \Pi_N M_g :H^0(M, L^N) \to H^0(M, L^N),$$ where $M_g$ is
multiplication by
 $g$.
 Then
$T^g_N$ is a self-adjoint operator 
$H^0(M, L^N)$, which can be
identified with a Hermitian $d_N\times d_N$ matrix by fixing  one orthonormal basis.  Here, as above, $d_N = \dim H^0(M, L^N)$.

\begin{lem} \label{taulem}
$$\lim_{N \to \infty} \frac{1}{d_N}{\rm
Tr}\;T^g_N   = \int_M g(z) d\mu_{\phi} (z). $$
\end{lem}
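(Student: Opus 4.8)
The plan is to compute the trace of the Toeplitz operator $T_N^g$ directly in terms of the density of states and then invoke the Bergman kernel asymptotics already established in Theorem \ref{Ber1} and Theorem \ref{BBWNTH}. First I would fix an orthonormal basis $\{S_j^N\}$ of $H^0(M, L^N)$ with respect to ${\rm Hilb}_N(h, \nu)$ and express the trace as a sum of diagonal matrix entries. Since $T_N^g = \Pi_N M_g$ acts on holomorphic sections, the $(j,j)$ entry is $\langle \Pi_N M_g S_j^N, S_j^N\rangle = \langle M_g S_j^N, S_j^N\rangle$ (using that $\Pi_N$ is the orthogonal projection and $S_j^N$ is already holomorphic), which equals $\int_M g(z)\, |S_j^N(z)|^2_{h^N}\, d\nu(z)$.

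Summing over $j$ and interchanging sum and integral, the trace becomes
\begin{equation}
{\rm Tr}\, T_N^g = \sum_{j=1}^{d_N} \int_M g(z)\, |S_j^N(z)|^2_{h^N}\, d\nu(z) = \int_M g(z)\, \Pi_{h^N, \nu}(z)\, d\nu(z),
\end{equation}
where the last equality uses the definition \eqref{DOS} of the density of states. This is the key identity: the trace is exactly the integral of $g$ against the Bergman measure $\Pi_{h^N, \nu}(z)\, d\nu$. Dividing by $d_N$ and recalling from Lemma \ref{EMASS} that $d_N = N^m(1 + O(1/N))$, I would write
\begin{equation}
\frac{1}{d_N}{\rm Tr}\, T_N^g = \frac{N^m}{d_N}\int_M g(z)\, N^{-m}\Pi_{h^N, \nu}(z)\, d\nu(z).
\end{equation}

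Now I would apply the asymptotics directly: by Theorem \ref{Ber1} (respectively Theorem \ref{BBWNTH} in the Bernstein-Markov generality), $N^{-m}\Pi_{h^N, \nu}(z)\, d\nu \to d\mu_{\phi} = {\rm MA}(\phi_{eq})$ in the weak* sense of measures. Since $g \in C(M)$ is a valid test function against this weak* convergence, the integral tends to $\int_M g\, d\mu_{\phi}$, and the prefactor $N^m/d_N \to 1$. Combining these gives the claimed limit. The only point requiring mild care is that the statement of the lemma uses $g \in C(M)$, so one should ensure the weak* convergence in Theorem \ref{Ber1} is indeed tested against continuous (not merely smooth) functions; this is automatic since the density-of-states measures have uniformly bounded mass (their total mass converges to $\mu_{\phi}(M)$), so continuous test functions may be approximated by smooth ones with a uniform error bound. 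I do not expect any genuine obstacle here: the result is essentially a restatement of the Bergman measure asymptotics once the trace has been identified with the integral of $g$ against the density of states, and the main (entirely routine) step is the diagonal trace computation using self-adjointness of $\Pi_N$ and the reproducing property.
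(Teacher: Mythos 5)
Your proposal is correct and follows essentially the same route as the paper: identify $\frac{1}{d_N}{\rm Tr}\,T^g_N$ with $\frac{1}{d_N}\int_M g(z)\,\Pi_{h^N,\nu}(z)\,d\nu(z)$ via the diagonal trace computation, then invoke the weak* convergence $N^{-m}\Pi_{h^N,\nu}(z)\,d\nu \to d\mu_{\phi}$ from Theorems \ref{Ber1} and \ref{BBWNTH} together with $d_N \sim N^m$. The paper states this more tersely, and your added remark on testing the weak* limit against merely continuous $g$ is a reasonable (if routine) point of care.
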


\begin{proof} The trace is obviously given by
$$\tau_{h, \nu}(g) = \lim_{N \to \infty} \frac{1}{N^m} \int_M g(z)
\Pi_N (z) d\nu(z). $$ Hence the result follows from
Berman's asymptotics Theorem \ref{Ber1} and \eqref{MALIM2}. \end{proof}

Henceforth we use the notation,

\begin{defin} $\tau_{h, \nu}(g) : =
 \int_M g(z) d\mu_{\phi} (z).$ \end{defin}

\begin{lem} \label{SQUARE}  Under the assumptions of Theorem \ref{BERMANOFFD},
$$ \lim_{N \rightarrow \infty}
\frac{1}{d_N} {\rm Tr}\;(T^g_N)^2 = \tau_{h, \nu} (g^2)\;.$$
\label{BGlemma}\end{lem}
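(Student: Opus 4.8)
The goal is to compute the limit of $\frac{1}{d_N}\operatorname{Tr}(T^g_N)^2$ and identify it with $\tau_{h,\nu}(g^2)$. The plan is to expand the square of the Toeplitz operator explicitly in terms of the Bergman kernel and then feed in the off-diagonal asymptotics of Theorem~\ref{BERMANOFFD}.

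\medskip
\noindent\textbf{Setup of the trace.} First I would write out the Schwartz kernel of $T^g_N = \Pi_N M_g \Pi_N$ with respect to $d\nu$. Since $(T^g_N s)(z) = \int_M \langle \Pi_N(z,w), g(w) s(w)\rangle_{h^N}\, d\nu(w)$, composing two copies gives a kernel involving $\Pi_N(z,w) g(w) \Pi_N(w,u) g(u)$ integrated over the intermediate variable. Taking the trace contracts the two ends, so that
\begin{equation}
\operatorname{Tr}(T^g_N)^2 = \int_M \int_M g(z)\, g(w)\, |\Pi_N(z,w)|^2_{h^N}\; d\nu(z)\, d\nu(w),
\end{equation}
where $|\Pi_N(z,w)|^2_{h^N} = |B_{h^N,\nu}(z,w)|^2 e^{-N(\phi(z)+\phi(w))}$ in a local frame via \eqref{BKdef}. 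This is the key algebraic identity; it is entirely parallel to the positive-line-bundle computation in \cite{ShZ99}, the only change being that $\Pi_N$ is the non-standard Bergman projection for $(h,\nu)$.

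\medskip
\noindent\textbf{Inserting the asymptotics.} After dividing by $d_N$ and recalling $d_N = N^m(1 + O(1/N))$ from Lemma~\ref{EMASS}, the integrand carries the factor $N^{-m}|B_{h^N,\nu}(z,w)|^2$. Theorem~\ref{BERMANOFFD} states precisely that $N^{-m}|B_{h^N}(z,w)|^2 \,\omega_N(z)\wedge\omega_N(w) \to \Delta \wedge d\mu_{\phi}$ weakly, where $\Delta$ denotes the diagonal current. I would apply this with the smooth test function $g(z)g(w)$ on $M\times M$. Because the limiting measure $\Delta \wedge d\mu_\phi$ is supported on the diagonal $\{z=w\}$, pairing it against $g(z)g(w)$ collapses both arguments to a single point and produces $\int_M g(z)^2\, d\mu_\phi(z) = \tau_{h,\nu}(g^2)$ by the Definition preceding the lemma. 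This yields exactly the claimed limit.

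\medskip
\noindent\textbf{Main obstacle.} I expect the delicate step to be the passage to the limit itself: Theorem~\ref{BERMANOFFD} is a weak-$*$ statement, so I must ensure that $g(z)g(w)$ is an admissible (continuous, compactly supported) test function and that the normalizing volume forms $\omega_N$ and the measure $d\nu$ are compatibly matched, since in the smooth-volume-form setting $\nu$ plays the role of $\omega_N$. A secondary point requiring care is the meaning of the diagonal current $\Delta$ and the verification that its contraction against $g(z)g(w)$ really gives $\int_M g^2\, d\mu_\phi$ rather than a more complicated double integral; this is where the off-diagonal decay encoded in Theorem~\ref{BERMANOFFD} is essential, exactly as the rapid off-diagonal decay of $\Pi_N(z,w)$ drives the corresponding step in the positive case \cite{ShZ99}. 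Once these identifications are in place, the conclusion follows directly from Lemma~\ref{varintb} and the surrounding off-diagonal estimates already established.
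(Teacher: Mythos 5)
Your proposal is correct and follows essentially the same route as the paper: expand $\operatorname{Tr}(T^g_N)^2$ as the double integral $\int_M\int_M g(z)g(w)\,|\Pi_N(z,w)|^2\,d\nu(z)\,d\nu(w)$, normalize by $d_N\sim N^m$, and apply the off-diagonal asymptotics of Theorem \ref{BERMANOFFD} to the test function $g(z)g(w)$, whereupon the diagonal current collapses the double integral to $\int_M g^2\,d\mu_\phi=\tau_{h,\nu}(g^2)$. The only stray remark is the closing appeal to Lemma \ref{varintb}, which concerns Gaussian moments and plays no role here, but this does not affect the argument.
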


\begin{proof} By Theorem \ref{BERMANOFFD}, we have

$$\begin{array}{lll}N^{-m}  {\rm Tr}\;(T^g_N)^2 & = & N^{-m} {\rm Tr} M_g \Pi_N M_g \Pi_N = N^{-m} \int_M \int_M g(z)    g(w) |\Pi_N(w, z)|^2 d\nu(w)
d\nu(z) \\ && \\  & \to &  \int_M \int_M g(z) g(w) \Delta \wedge
d\mu_{\phi}
= \int_M g(z)^2 d\mu_{\phi}(z).
\end{array}$$

\end{proof}

\subsection{Proof of quantum ergodicity}

\noindent {\it Definition:\/} We say that ${\bf S} \in {\mathcal
ONB}$ has the ergodic property  if 
$$\lim_{N \rightarrow \infty}\frac{1}{N}
\sum_{n=1}^N \frac{1}{d_n} \sum_{j = 1}^{d_n} \left|\int_M g(z)
\|S^n_j(z)\|^2_{h_n}d \nu - \tau_{h, \nu} (g)
\right|^2=0\;,\quad \forall g \in {\mathcal C}(M)\;.
\eqno({\mathcal EP})$$
It may seem unaesthetic to average in $n$ as well as over $j$, but
averages of positive quantities which tend to zero can only happen when
`almost all' of the corresponding terms tend to zero. The double-average slightly
weakens the notion of `almost all'. In complex dimensions $\geq 2$, it
is unnecessary to double average.

More precisely, the ergodic  property may be rephrased in the following way: Let ${\bf S}
= \{(S^N_1, \dots, S^N_{d_N}): N=1,2,\ldots\}\in {\mathcal ONB}$.
Then the ergodic property $({\mathcal EP})$ is equivalent to the
following weak* convergence property: {\it There exists a
subsequence $\{S'_1,S'_2,\ldots\}$ of relative density one of the
sequence $\{S^1_1, \dots, S^1_{d_1},\ \dots,\
S^N_1,\dots,S^N_{d_N},\ \dots\}$ such that}
$$\int_M g(z) \|S'_n (z) \|^2
d\nu   \rightarrow \tau_{h, \nu}(g) \;,\quad \forall g \in
{\mathcal C}(M)\;. \eqno({\mathcal EP}')$$ A subsequence
$\{a_{k_n}\}$ of a sequence $\{a_n\}$ is said to have relative
density one if $\lim_{n\to\infty} n/k_n = 1$.  We refer to
\cite{ShZ99} for the (standard)  proof of the equivalence of the notions. We now
generalize the result of \cite{ShZ99} to our setting. Recall the definition 
of $ {\mathcal ONB}$ in
\eqref{ONB}.

\begin{theo}\label{EONB} Let $(L, h) \to M$ be a  line bundle with $c_1(L)$ a \kahler
class, let $h$ be a smooth Hermitian metric on $L$ and let $\nu
$ be a Bernstein-Markov probability measure  (\ref{BM}).
 Then

  {\bf (a)} A random ${\bf S}\in{\mathcal ONB}$ has the
ergodic property $({\mathcal EP})$, or equivalently, $({\mathcal
EP}')$. In fact, in complex dimensions $m \geq 2$, a random ${\bf
S}\in{\mathcal ONB}$ has the property $$\lim_{N \rightarrow
\infty} \frac{1}{d_N} \sum_{j=1}^{d_N} \left| \int_M g\|S^N_j\|^2
dV - \tau_{h, \nu}(g)\right|^2 = 0\;,\quad \forall g\in {\mathcal
C}(M)\;,$$ or equivalently, for each $N$ there exists a subset
$\Lambda_N \subset \{1, \dots, d_N\}$ such that $ \frac{\#
\Lambda_N}{d_N} \rightarrow 1$ and $$\lim_{N \rightarrow \infty, j
\in \Lambda_N} \int_M g \|S^N_j\|^2 d\nu = \tau_{h, \nu}(g).$$\\ {\bf (b)}
A random sequence of sections ${\bf
s}=\{s_1,s_2,\dots\}\in{\mathcal S}$ has a subsequence
$\{s_{N_k}\}$ of relative density 1 such that $$\int_M g(z)
\|s_{N_k} (z) \|^2 d\nu \rightarrow \tau_{h, \nu}(g)\;,\quad \forall
g \in {\mathcal C}(M)\;.$$ In complex dimensions $m \geq 2$, the
entire sequence has this property.
\end{theo}

 To simplify the notation, we write
\begin{equation}\label{A} A^g_{nj}({\bf S}) =\left|\int_M g(z)
\|S^n_j(z)\|^2_{h^n}d\nu- \tau_{h,
\nu}(g)\right|^2\;.\end{equation}

\begin{proof} We adapt the
 proof in the case of positive line bundles to our more general setting. For the reader's convenience
 we also recall the main steps of the proof even when they are unchanged in the present setting.

  We
then have
\begin{equation} A^g_{nj}({\bf S}) = \left|(g S^n_j,
S^n_j)- \tau_{h, \nu}(g) \right|^2 = \left|(T^g_N S^n_j,
S^n_j)-\tau_{h, \nu}(g) \right|^2 = \left|(U^*_n T^g_n U_n e^n_j,
e^n_j)- \tau_{h, \nu}(g) \right|^2,\end{equation} where ${\bf S} =
\{U_N\},\ U_N\in \U(d_N)\equiv{\mathcal ONB}_N$.

By Lemma \ref{taulem},
\begin{equation}\label{Anj'} A^g_{nj}({\bf S})=\wt A^g_{nj}({\bf
S})+O(\frac{1}{n})\;,\end{equation} where
\begin{equation}\label{Anj}\wt A^g_{nj}({\bf S})=\left|(U^*_n
T^g_n U_n e^n_j, e^n_j)-\frac{1}{d_n}{\rm Tr}\;T^g_n \right|^2
\;.\end{equation}  (The bound for the $O(\frac{1}{n})$ term in
(\ref{Anj'}) is independent of $\bf S$.)

Once we fix an orthonormal basis, the  skew-Hermitian operator $iT^g_n$ can be identified with an element of the Lie
algebra ${\go u}(d_N)$ of the unitary group $\U(d_N)$. Let ${\go t}(d)$ denote the
Cartan subalgebra of diagonal elements in ${\go u}(d)$, and let
$\|\cdot\|^2$ denote the Euclidean inner product on ${\go t}(d)$.
Also let
$$J_d:i{\go u}(d)\rightarrow i{\go t}(d)$$
denote the orthogonal projection (extracting the diagonal).
Finally, let $$\bar J_d (H)=\left(\frac{1}{d}{\rm
Tr}\;H\right){\rm Id}_{d}\;,$$ for Hermitian matrices $H\in i{\go
u}(d)$. (Thus, $H=H^0 +\bar J_d(H)$, with $H^0$ traceless, gives
us the decomposition  ${\go u} (d) =  {\go su} (d)\oplus \R$.) 

As discussed below the statement of Theorem \ref{RANSEQ}, we identify
a random sequence of ONB's of $H^0(M, L^N)$ with a random sequence
$\{U_n\}$ of unitary matrices with respect to the fixed ONB $\{e^n_j\}_{j=1}^{d_n}$. The infinite product $ {\mathcal ONB}: =\prod_{n} U(d_n)$ is endowed with normalized product Haar measure.
We introduce the random variables:
$$\begin{array}{l}
Y^g_n: {\mathcal ONB} \to [0,+\infty)\\[6pt]
Y^g_n({\bf S}) := \|J_{d_n} (U_n^*T^g_n U_n) - \bar
J_{d_n}(T^g_n)\|^2\end{array}$$ By (\ref{Anj'})
\begin{equation}\label{Y}\frac{1}{d_n} Y^g_n({\bf S})
=\frac{1}{d_n}\sum_{j=1}^{d_n}\wt A^g_{nj}({\bf S})
=\frac{1}{d_n}\sum_{j=1}^{d_n}A^g_{nj}({\bf S})
+O(\frac{1}{n})\end{equation} (where the $O(\frac{1}{n})$ term is
independent of {\bf S}). Thus,  $({\mathcal EP})$ is equivalent
to:
\begin{equation}\label{EP*}
\lim_{N\rightarrow\infty}\frac{1}{N}\sum_{n=1}^N \frac{1}{d_n}
Y^g_n ({\bf S}) = 0\;,\quad \forall g \in {\mathcal C}(M)\;.
\end{equation}  We plan to apply the Kolmogorov strong law of
large numbers to these sums of  independent random variables, and to prove (\ref{EP*}) we need the following asymptotic formula for the expected values of
the $Y^g_n$.

\begin{lem} \quad $\displaystyle E(Y^g_n) = \tau_{h, \nu} (g^2) -
(\tau_{h, \nu} (g))^2 +o(1)\;.\label{BGcor}$\end{lem}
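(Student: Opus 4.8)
The plan is to compute the expectation $E(Y^g_n)$ by reducing the random matrix quantity to a sum over diagonal entries of $U_n^* T^g_n U_n$ and then using the fact that, for Haar-random $U_n \in \U(d_n)$, these entries have a known second-moment structure. First I would fix an orthonormal basis and write $H = T^g_n$, so that by the definition of $Y^g_n$,
$$Y^g_n({\bf S}) = \|J_{d_n}(U_n^* H U_n) - \bar J_{d_n}(H)\|^2 = \sum_{j=1}^{d_n} \left| (U_n^* H U_n)_{jj} - \frac{1}{d_n}{\rm Tr}\,H \right|^2.$$
Each diagonal entry is $(U_n^* H U_n)_{jj} = \langle H U_n e^n_j, U_n e^n_j\rangle$, which is the expectation $\langle H v, v\rangle$ for $v = U_n e^n_j$ a uniformly random unit vector on the sphere $S^{2d_n-1}$. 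Thus the problem decouples into a single spherical integral, and $E(Y^g_n) = d_n \cdot E_v\big(|\langle Hv,v\rangle - \frac{1}{d_n}{\rm Tr}\,H|^2\big)$ by symmetry over $j$.

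The key step is to evaluate the spherical variance of the quadratic form $\langle H v, v\rangle$. This is a standard computation: diagonalizing $H$ with eigenvalues $\lambda_1,\dots,\lambda_{d_n}$, one has $\langle H v,v\rangle = \sum_i \lambda_i |v_i|^2$, and using the fourth-moment identities $E_v(|v_i|^2|v_k|^2) = \frac{1+\delta_{ik}}{d_n(d_n+1)}$ for the uniform measure on the complex sphere, I would obtain
$$E_v\left(\Big|\langle Hv,v\rangle - \frac{1}{d_n}{\rm Tr}\,H\Big|^2\right) = \frac{1}{d_n(d_n+1)}\left({\rm Tr}\,H^2 - \frac{1}{d_n}({\rm Tr}\,H)^2\right).$$
Multiplying by $d_n$ gives $E(Y^g_n) = \frac{d_n}{d_n+1}\left(\frac{1}{d_n}{\rm Tr}\,H^2 - \frac{1}{d_n^2}({\rm Tr}\,H)^2\right)$. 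Now I would invoke the \szego limit formulae already established: Lemma \ref{SQUARE} gives $\frac{1}{d_n}{\rm Tr}\,(T^g_n)^2 \to \tau_{h,\nu}(g^2)$, and Lemma \ref{taulem} gives $\frac{1}{d_n}{\rm Tr}\,T^g_n \to \tau_{h,\nu}(g)$. Since $\frac{d_n}{d_n+1} \to 1$, the expression converges to $\tau_{h,\nu}(g^2) - (\tau_{h,\nu}(g))^2$, which is exactly the claimed formula modulo $o(1)$.

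The main obstacle is not conceptual but bookkeeping: one must be careful about the precise normalization of the complex spherical measure $\mu_N$ (variance $1/d_n$ per complex coordinate, per \eqref{normalized}) so that the fourth-moment constants come out correctly, and one must confirm that the $\frac{d_n}{d_n+1}$ prefactor together with the $O(1/n)$ error from \eqref{Anj'} are absorbed into the stated $o(1)$. I would also note that the two trace asymptotics are exactly the two moments supplied by the off-diagonal Bergman kernel asymptotics of Theorem \ref{BERMANOFFD}, so the entire content of the lemma rests on combining the elementary Haar-measure variance computation with those already-proven \szego limits; no new analytic input is required.
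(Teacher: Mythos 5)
Your proposal is correct and follows essentially the paper's route: reduce $E(Y^g_n)$ to the unitary orbit integral $\int_{\U(d)} \|J_d(U^*D(\vec\lambda)U)-\bar J_d(D(\vec\lambda))\|^2\,dU = \frac{{\mathcal S}_2}{d+1}-\frac{{\mathcal S}_1^2}{d(d+1)}$ and then feed in the two \szego limits of Lemma \ref{taulem} and Lemma \ref{SQUARE}. The only difference is that the paper cites this orbit-integral formula from \cite{ShZ99}, whereas you rederive it directly (and correctly) by observing that each column of a Haar-random unitary is uniform on the sphere and applying the fourth-moment identities $E(|v_i|^2|v_k|^2)=\frac{1+\delta_{ik}}{d(d+1)}$.
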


 Before proving Lemma \ref{BGcor}, we show how it implies Theorem
 \ref{EONB}. First, the Lemma implies that
\begin{equation}\label{EP*ave}\lim_{N\rightarrow \infty} \frac{1}{N}
\sum_{n=1}^N E\left(\frac{1}{d_n}Y^g_n\right)=0\;,\end{equation}
since  $\frac{1}{N}
\sum_{n=1}^N \frac{1}{d_n} \to 0$. 
Next we observe that the individual terms have bounded variances:
$$ \mbox{\rm Var}\left(\frac{1}{d_n}Y^g_n\right)
\leq  \sup \left(\frac{1}{d_n}Y^g_n\right)^2 \leq \max_j\; \sup
(\wt A^g_{nj})^2\;.$$ By (\ref{Anj}),$$ \wt A^g_{nj}({\bf S}) \leq
4(U^*_n T^g_n U_n e^n_j,e^n_j)^2\leq 4\sup g^2 \;,$$ and therefore
\begin{equation} \mbox{\rm
Var}\left(\frac{1}{d_n}Y^g_n\right)\leq 16 \sup g^4
<+\infty\;.\end{equation} Since the variances of the independent
random variables $\frac{1}{d_n}Y^g_n$ are bounded, (\ref{EP*})
follows from (\ref{EP*ave}) and the Kolmogorov strong law of large
numbers, which gives part (a) for general dimensions.

  \begin{rem}  In
dimensions $m \geq 2$, we obtain the improved conclusion as
follows: From the fact that $E(\frac{1}{d_N} Y^{g}_N) =
O(\frac{1}{N^m})$ it follows that
$E\left(\sum_{N=1}^{\infty}\frac{1}{d_N}Y^{g}_N\right)<+\infty$
and thus $\frac{1}{d_N} Y^{g}_N \rightarrow 0$ almost surely when
$m \geq 2$. The quantity we are interested in is
$$X_N^{g}: = \frac{1}{d_N} \sum_{j=1}^{d_N} \left| \int_M g
\|S^N_j\|^2 dV - \tau_{h,\nu}(g)  \right|^2 = \frac{1}{d_N}
\sum_{j=1}^{d_N} A^{g}_{Nj}.$$ However, by (\ref{Y}),
$$\sup_{{\mathcal ONB}} |X^{g}_N - \frac{1}{d_N} Y^{g}_N| =
O(\frac{1}{N}).$$ Hence also $X_N^{g} \rightarrow 0$ almost
surely.
\end{rem}

To verify part (b), we note that since $E(\wt A^g_{nj})=E(\wt
A^g_{n1})$, for all $j$, it follows from (\ref{Y}) that $E(\wt
A^g_{n1})= E(\frac{1}{d_n}Y^g_n)$.  Thus,  \begin{equation}
\lim_{N\to\infty}\frac{1}{N}\sum_{n=1}^N \wt
A^g_{n1}=0\;,\label{EP1}\end{equation} or equivalently,
\begin{equation} \lim_{N\to\infty}\frac{1}{N}\sum_{n=1}^N
A^g_{n1}=0\;.\label{EP*1}\end{equation} Part (b) then follows from
(\ref{EP*1}) exactly as before.

It remains to prove Lemma~\ref{BGcor}. Denote the eigenvalues of
$T^g_n$ by $\lambda_1,\dots,\lambda_{d_n}$ and write
$${\mathcal S}_k(\lambda_1,\dots,\lambda_{d_n})=\sum_{j=1}^{d_n}
\lambda_j^k\;.$$ Note that  \begin{equation} {\rm Tr}\;(T^g_n)^k
={\mathcal S}_k(\lambda_1,\dots,\lambda_{d_n})\;.
\end{equation}
Lemma~\ref{BGcor} is an immediate consequence of
Lemma~\ref{BGlemma} and the following formula:

\begin{equation}\label{orbit-int} \int_{\U(d)} \|J_d (U^*D(\vec\lambda) U) -
\bar J_d(D(\vec\lambda))\|^2 dU = \frac{{\mathcal
S}_2(\vec\lambda)}{d+1}- \frac{{\mathcal
S}_1(\vec\lambda)^2}{d(d+1)}\;,\end{equation} where
$\vec\lambda=(\lambda_1,\dots,\lambda_d)\in\R^d$, $D(\vec\lambda)$
denotes the diagonal matrix with entries equal to the $\lambda_j$,
and integration is with respect to Haar probability measure on
$\U(d)$.

  We refer to \cite{ShZ99} for the proof of  (\ref{orbit-int}).

\end{proof}

\end{document}